\documentclass[3p, times]{elsarticle}

\usepackage{amsmath}
\usepackage{amssymb}
\usepackage{amsthm}
\usepackage{hyperref}
\usepackage{mathrsfs}
\usepackage{multirow}
\usepackage{booktabs}
\usepackage{siunitx}
\usepackage[ruled,linesnumbered]{algorithm2e}
\usepackage{graphicx}
\usepackage{subcaption}
\usepackage{xcolor}
\newtheorem{Proposition}{Proposition}

\newtheorem{theorem}{Theorem}[section]
\newtheorem{assumption}{Assumption}[section]

\journal{}
\biboptions{sort&compress}
\begin{document}
\begin{frontmatter}

\title{Boundary neuron method for solving partial differential equations}

\author[JLU]{Ye Lin}\ead{linye21@mails.jlu.edu.cn}
\affiliation[JLU]{
organization={School of Mathematics, Jilin University}, 
addressline={2699 Qianjin Street}, 
city={Changchun},
postcode={130012}, 
state={Jilin},
country={China}.}

\author[JLU]{Wentao Liu}\ead{wtliu24@mails.jlu.edu.cn}

\author[TXST]{Young Ju Lee}
\ead{yjlee@txstate.edu}
\affiliation[TXST]{
organization={Department of Mathematics, Texas State University}, 
addressline={601 University Drive}, 
city={San Marcos},
postcode={78666}, 
state={Texas},
country={U.S.A.}.}

\author[JLU]{\texorpdfstring{Jiwei Jia\corref{cor1}}{Jiwei Jia}}\ead{jiajiwei@jlu.edu.cn}
\cortext[cor1]{Corresponding author.}

\begin{abstract}
We propose a boundary neuron method with random features (BNM-RF) for solving partial differential equations. The method approximates the unknown boundary function by a shallow network within the boundary integral formulation. With randomly sampled and fixed hidden parameters, the computation reduces to a linear least squares problem for the output coefficients, which avoids gradient based nonconvex optimization. This construction retains the dimensionality reduction of boundary integral equations and the linear solution structure of the random feature method. For elliptic problems, we establish convergence analysis by combining kernel-based method with random feature approximation, and obtain error bounds on both the boundary and the interior solution. Numerical experiments on Laplace and Helmholtz problems, including interior and exterior cases, show that the proposed method achieves competitive accuracy relative to the boundary element method and favorable performance relative to boundary integral neural networks in the tested settings with only few neurons. Overall, the proposed method provides a practical framework for combining boundary integral equations with neural network for problems on complex geometries and unbounded domains.
\end{abstract}

\begin{keyword}
 Random Feature Method, Boundary Integral Equations, Scientific machine learning, Error analysis  
\end{keyword}

\end{frontmatter}

\section{Introduction}
\label{intro}

Solving partial differential equations using boundary integral formulations plays an important role in scientific computing and engineering simulations, especially in acoustics, computational electromagnetics, and elasticity mechanics \cite{kirkup2019acoustics,chew2022integral,banerjee1983elastostatics}. For such problems, boundary integral equations provide an attractive reformulation. By transferring the unknowns from the domain to the boundary, they reduce the dimension of the computation and are naturally suited to exterior problems. The boundary element method (BEM) is a classical discretization of this framework and has been widely used in practice \cite{atkinson1997numerical}. However, for problems with complex geometries or boundary conditions, the boundary element method often requires a fine boundary discretization to achieve high accuracy. Such refined discretizations usually lead to dense linear systems, which in turn result in substantial computational and storage costs.

In recent years, deep neural network methods for partial differential equations have attracted considerable attention \cite{raissi2019physics,karniadakis2021physics}. In particular, a growing body of work have combined neural networks with boundary integral formulations or restricted the approximation to boundary quantities. BINN \cite{binn_Sun2023} is an early example of the latter idea, it approximates only the boundary unknowns and uses the residuals of the boundary integral equations in the loss function, thereby reducing the problem dimension by one and making the method well suited to infinite, semi infinite, and heterogeneous domain problems. BINet \cite{binet_Lin2023}, by contrast, builds the solution directly from boundary layer potentials, so that the governing PDE is satisfied automatically and only the boundary conditions enter the loss. BI-GreenNet \cite{bigreennet_Lin2023} further develops this representation based idea by shifting the learning target from the solution itself to Green’s functions. Related ideas have also been used for two dimensional elastostatic and piezoelectric problems in \cite{zhang2024boundaryintegrated}, where exact boundary integral equations confine the unknowns to the boundary and replace differential operators with integral operators.
Despite these advantages, existing boundary integral network approaches still rely mainly on gradient based training, and their performance may therefore remain sensitive to optimization efficiency and training robustness.

In addition to fully trainable neural networks, shallow network methods have also attracted increasing attention in scientific computing. For PDEs, two representative examples are finite neuron method (FNM) \cite{xu2020fnm} and the random feature method (RFM) \cite{rahimi2007random,liu2021random}. FNM provides a shallow neural network framework with approximation properties similar to those of finite element methods and rigorous convergence analysis \cite{xu2020fnm}, and subsequent work has further developed efficient training strategies such as neuron wise subspace correction methods \cite{park2025fnm}. RFM, in contrast, fixes the hidden parameters in advance and determines the output coefficients by solving a linear least squares problem. This linear solution structure avoids repeated nonconvex optimization while retaining the approximation power of shallow neural networks. RFM has been applied to a range of PDE settings, including elliptic, time dependent, interface, and diffusion problems, as well as high precision regimes \cite{chen2022bridging,chen2024optimization,chen2023random,chi2024random,mei2025solving,chen2024high}. Its least squares structure has also motivated improved optimization and iterative solvers \cite{chen2024optimization,   chen2024high}, while related theoretical work has addressed random features, as well as kernel and Gaussian process based error analysis \cite{liu2021random, lanthaler2023error, liao2025solving, batlle2025error}. Despite this progress, the use of shallow networks with random features in boundary integral equations remains largely unexplored. In particular, existing theoretical results for RFM are mainly developed for strong form formulations, whereas the boundary integral setting requires approximation and stability analysis directly on the boundary. 
At the same time, existing neural methods based on boundary integral equations still rely mainly on gradient based optimization. This creates a gap between the advantages of boundary integral formulations and the linear solution structure of RFM.

In this paper, we propose the boundary neuron method with random features (BNM-RF) for solving boundary value problems through boundary integral equations. The proposed method approximates the unknown boundary function within the boundary integral formulation and determines the corresponding coefficients through a linear least squares problem. In this way, it combines the dimension reduction of boundary integral equations with the linear solution procedure of shallow networks with random features. Unlike existing boundary integral network approaches, which are trained by gradient based optimization, the proposed framework determines the boundary unknowns through a single linear solve and avoids iterative training in the main solution stage.

The contributions of this work are threefold. First, we formulate BNM-RF as a boundary integral solver with random features and derive the corresponding least squares system for the boundary unknowns. Second, for elliptic problems, we extend the random feature error analysis from strong form equations to the boundary integral setting and obtain error bounds for both the boundary function and the interior solution. Third, we validate the proposed method on Laplace and Helmholtz problems, including interior and exterior cases. The numerical results show that BNM-RF is highly competitive with BINN and the classical BEM in the reported two dimensional tests, while the three dimensional acoustic scattering example also identifies singular integral treatment and geometric approximation as the main issues for further improvement.

The remainder of the paper is organized as follows. Section \ref{sec:boundary_formulation} reviews the boundary integral formulation and related background. Section \ref{sec:BNM} presents the proposed BNM-RF method. Section \ref{sec:analysis} develops the convergence analysis. Section \ref{sec:result} reports the numerical results. Finally, Section \ref{sec:conclusion} concludes the paper.

\section{Background}
\label{sec:boundary_formulation}

\subsection{Problem description}
\label{sec:problem-description}
Let $\Omega \subset \mathbb{R}^d$ be a bounded domain with closure  $\overline{\Omega}$ and exterior $\Omega^c = \mathbb{R}^d \setminus \overline{\Omega}$. We consider the following boundary value problem (BVP) for a homogeneous PDE:
\begin{equation}
\label{eq:harmonic_pde}
\begin{split}
\mathcal{L} u(\mathbf{x}) &= 0, \quad \mathbf{x} \in \Omega \text{ or } \mathbf{x} \in \Omega^c, \\
u(\mathbf{x}) &= g(\mathbf{x}), \quad \mathbf{x} \in \partial \Omega_D
\\
\frac{\partial u(\mathbf{x})}{\partial \mathbf{n}}&=q(\mathbf{x}),\quad \mathbf{x} \in \partial \Omega_N
\end{split}
\end{equation}
where $\partial \Omega_D$ and $\partial \Omega_N$ denote complementary portions of the boundary $\partial \Omega$, satisfying
\begin{equation}
\label{eq:boundary}
\partial\Omega_D \cup \partial\Omega_N = \partial\Omega, \quad \partial\Omega_D \cap \partial\Omega_N = \varnothing.
\end{equation}
Here, $\mathbf{n}$ denotes the outward unit normal vector on $\partial\Omega$.

This work considers both interior and exterior BVPs, in which the governing equation $\mathcal{L}
u = 0$ is posed in $\Omega$ and $\Omega^c$, respectively. The differential operator $\mathcal{L}$ is chosen to be either Laplace operator $-\Delta $
or Helmholtz operator $-\Delta-k^2$, where $k$ denotes the wavenumber. To analyze these BVPs, we introduce the fundamental solutions associated with the differential operator.

\subsection{Fundamental solution of differential operator}
Given a point $\mathbf{x} \in \mathbb{R}^d$, the fundamental solution (or free-space Green's function) $\Phi(\mathbf{x},\mathbf{y})$ associated with the differential operator $\mathcal{L}_{\mathbf{y}}$ is defined as the solution of the following PDE: 
\begin{equation} 
\label{eq:green_pde}
\begin{split}
\mathcal{L}_{\mathbf{y}} \Phi(\mathbf{x},\mathbf{y}) = \delta(\mathbf{x}-\mathbf{y}) &, \quad \mathbf{y} \in \mathbb{R}^d. \\
\end{split}
\end{equation}
where $\delta(\cdot)$ denotes the Dirac delta function and the operator $\mathcal{L}_{\mathbf{y}}$ acts on the $\mathbf{y}$ variable.

For many important linear PDEs, the fundamental solutions can be written explicitly  \cite{duffy2015green}. In particular, for the Laplace equation $- \Delta u =0$, the fundamental solution is given by
\begin{align}
    \Phi(\mathbf{x},\mathbf{y})=
    \begin{cases}
    -\dfrac{1}{2\pi}\ln|\mathbf{x-y}|,& d=2,\\[8pt]
\dfrac{1}{d(d-2)\,\alpha(d)}\dfrac{1}{\mathbf{|x-y|}^{\,d-2}},& d\ge3,
\end{cases}
\end{align}
where $\alpha(d)$ is the volume of the unit ball in $\mathbb{R}^d$. For the Helmholtz equation $-\Delta u -k^2u = 0$, the fundamental solution $\Phi_k(\mathbf{x},\mathbf{y})$ is given by
\begin{align}
\Phi_k(\mathbf{x},\mathbf{y})=
  \begin{cases}
\displaystyle \frac{\mathrm{i}}{4} H_0^{(1)}(k\mathbf{|x-y|}), & d=2, \\[10pt]
\displaystyle \frac{e^{\mathrm{i}k\mathbf{|x-y|}}}{4 \pi\mathbf{|x-y|}}, & d=3, \\[10pt]
\displaystyle \frac{ \mathrm{i} \left(\frac{k}{2 \pi\mathbf{|x-y|}}\right) ^{\frac{n}{2}-1} H_{\frac{n}{2}-1}^{(1)}(k\mathbf{|x-y|})}{4}, & d \geq 4.
\end{cases}
\end{align}
where $H_n^{(1)}$ is the Hankel function \cite{evans2022partial} of the first kind of order $n$. The fundamental solution $\Phi$ is the kernel used to construct the layer potentials that form the basis of the boundary integral formulation.

\subsection{Potential Theory}

Let $\Omega$ be a bounded domain with smooth boundary $\partial\Omega$.  The Green's second identity \cite{evans2022partial} states that for functions $u,v\in C^2(\Omega)\cap C^{1}(\overline{\Omega})$, 
\begin{equation}
\label{eq:eq5}
    \int_{\Omega}(u\mathcal{L} v-v\mathcal{L} u)\mathrm{d}\mathbf{y}=\int_{\partial\Omega}\left(u\frac{\partial v}{\partial\mathbf{n}}-v\frac{\partial u}{\partial\mathbf{n}}\right) \mathrm{d}S(\mathbf{y}).
\end{equation}
where $\mathbf{n}$ denotes the outer normal of $\partial\Omega$ at point $\mathbf{y}$ and $\mathrm{d}S(\mathbf{y})$ denotes the area element at the boundary point
$\mathbf{y}$. Let $u$ be a solution to the homogeneous PDE $\mathcal{L} u=0$ in $\Omega$. For a fixed $\mathbf{x} \in \Omega$, the fundamental solution $\Phi(\mathbf{x},\cdot)$ has a singularity at $\mathbf{x}$, to handle this singularity, we  apply \eqref{eq:eq5} in the domain \(\Omega_\varepsilon = \Omega \setminus B_\varepsilon(\mathbf{x})\), where \(B_\varepsilon(\mathbf{x})\) is a small ball of radius \(\varepsilon\) centered at \(\mathbf{x}\). Substituting the property of the fundamental solution given in \eqref{eq:green_pde} and then letting \(\varepsilon \to 0\), 
we obtain the following representation formula for \(u\) at \(\mathbf{x}\):
\begin{equation}
\label{eq:green_integral}
u(\mathbf{x})=\int_{\partial\Omega}\left[\frac{\partial\Phi(\mathbf{x},\mathbf{y})}{\partial\mathbf{n}}u(\mathbf{y})-\Phi(\mathbf{x},\mathbf{y})\frac{\partial u(\mathbf{y})}{\partial\mathbf{n}}\right] \mathrm{d}S(\mathbf{y}).
\end{equation}
Equation \eqref{eq:green_integral} shows that the solution $u(\mathbf{x})$
at any interior point can be expressed entirely in terms of its boundary values $\left.u\right|_{\partial \Omega}$ and its boundary normal derivative $\frac{\partial u}{\partial \mathbf{n}} \bigg|_{\partial \Omega}$. The terms in the integrand motivate the definitions of the double-layer and single-layer potentials, respectively.

Given the differential operator $\mathcal{L}$ and the corresponding fundamental solution $\Phi(\mathbf{x},\mathbf{y})$, for any continuous function $\hat{h}$ defined on $ \partial \Omega$, the single layer potential is defined as 
\begin{equation} 
\label{eq:single_layer_potential}
\begin{split}
\mathcal{S}[\hat{h}](\mathbf{x}) =- \int_{\partial \Omega} \Phi(\mathbf{x},\mathbf{y}) \hat{h}(\mathbf{y}) \mathrm{d}S(\mathbf{y}).
\end{split}
\end{equation}
 and the double layer potential is defined as, 
\begin{equation} 
\label{eq:double_layer_potential}
\begin{split}
\mathcal{D}[\hat{h}](\mathbf{x}) = -\int_{\partial \Omega} \frac{\partial\Phi(\mathbf{x},\mathbf{y})}{\partial \mathbf{n}} \hat{h}(\mathbf{y}) \mathrm{d}S(\mathbf{y}).
\end{split}
\end{equation}

Based on the layer potential theory of the Poisson equation and the Helmholtz equation, we have the following properties \cite{binet_Lin2023}\cite{helms2009potential}:
\begin{itemize}
    \item The single and double layer potentials satisfy \eqref{eq:harmonic_pde}, specifically, for $\mathbf{x}\notin\partial\Omega$, we have 
    \begin{equation} 
    \label{eq:property_1}
    \begin{split}
    \mathcal{L} \mathcal{S}[\hat{h}](\mathbf{x}) &= 0, \quad  \\
    \mathcal{L} \mathcal{D}[\hat{h}](\mathbf{x}) &= 0, \quad 
    \end{split}
    \end{equation}
    
    \item For $\mathbf{x}_0 \in \partial \Omega$, and the boundary near $\mathbf{x}_0$ is smooth, we have
    \begin{equation} 
    \label{eq:property_2}
    \begin{split}
    \lim_{\mathbf{x} \to\mathbf{x}_0} \mathcal{S}[\hat{h}](\mathbf{x}) &= \mathcal{S}[\hat{h}](\mathbf{x}_0), \\
    \lim_{\mathbf{x} \to\mathbf{x}_{0}^\pm}  \mathcal{D}[\hat{h}](\mathbf{x}) &= \mathcal{D}[\hat{h}](\mathbf{x}_0) \mp  \frac{1}{2} \hat{h}(\mathbf{x}_0),
    \end{split}
    \end{equation}
    where $\mathbf{x}\to\mathbf{x_0}^{-}$ and $\mathbf{x}\to\mathbf{x_0}^{+}$ mean convergence in $\Omega$ and $\Omega^{c}$, respectively.
\end{itemize}

\subsection{Boundary Integral Formulation}

Consider the interior problem of $\eqref{eq:harmonic_pde}$, where $\mathcal{L}u(\mathbf{x})=0$ is defined for $\mathbf{x}\in\Omega$. The boundary conditions could be prescribed as follows:
\begin{itemize}
    \item On the dirichlet boundary $\partial\Omega_D$, the potential value is given by $u(\mathbf{x})=g(\mathbf{x})$, while its normal derivative, denoted by $q_D(\mathbf{x}):=\frac{\partial u(\mathbf{x})}{\partial\mathbf{n}}$, is unknown.
    \item On the Neumann boundary $\partial\Omega_N$, the normal derivative is given by $\frac{\partial u(\mathbf{x})}{\partial\mathbf{n}}=q(\mathbf{x})$, while the potential value, denoted by $g_N(\mathbf{x}):=u(\mathbf{x})$, is unknown.
\end{itemize}
Substituting the known boundary conditions and the unknown quantities into the equation  \eqref{eq:green_integral}, the solution $u(\mathbf{x})$ 
can be expressed as:
\begin{equation} 
\label{eq:interior_exact}
u(\mathbf{x}) = \int_{\partial\Omega_D} \frac{\partial\Phi(\mathbf{x}, \mathbf{y})}{\partial\mathbf{n}} g(\mathbf{y}) \, \mathrm{d}S(\mathbf{y}) - \int_{\partial\Omega_D} \Phi(\mathbf{x}, \mathbf{y}) q_D(\mathbf{y}) \, \mathrm{d}S(\mathbf{y})+\int_{\partial\Omega_N} \frac{\partial\Phi(\mathbf{x}, \mathbf{y})}{\partial\mathbf{n}} g_N(\mathbf{y}) \, \mathrm{d}S(\mathbf{y})- \int_{\partial\Omega_N} \Phi(\mathbf{x}, \mathbf{y}) q(\mathbf{y}) \, \mathrm{d}S(\mathbf{y}) 
\end{equation}
where $\mathbf{x} \in \Omega$.
 To solve for unknown functions $q_D$ and $g_N$, we let the field point $\mathbf{x}$ approach a point on the boundary from within the domain. According to \eqref{eq:property_2}, we obtain a system of coupled boundary integral equations: 
\begin{itemize}
    \item For a boundary point $\mathbf{x}_0\in\partial\Omega_D$, the equation for the unknown $q_D$, $g_N$ is derived as:
\begin{equation}
\label{eq:bie-dirichlet-boundary}
\begin{aligned}
\frac{1}{2}g(\mathbf{x}_{0}) = & \int_{\partial \Omega_{D}} \frac{\partial \Phi\left(\mathbf{x}_{0}, \mathbf{y}\right)}{\partial \mathbf{n}} g(\mathbf{y}) \mathrm{d} S(\mathbf{y})
- \int_{\partial \Omega_{D}} \Phi\left(\mathbf{x}_{0}, \mathbf{y}\right) q_{D}(\mathbf{y}) \mathrm{d} S(\mathbf{y}) \\
& + \int_{\partial \Omega_{N}} \frac{\partial \Phi\left(\mathbf{x}_{0}, \mathbf{y}\right)}{\partial \mathbf{n}} g_{N}(\mathbf{y}) \mathrm{d} S(\mathbf{y})
- \int_{\partial \Omega_{N}} \Phi\left(\mathbf{x}_{0}, \mathbf{y}\right) q(\mathbf{y}) \mathrm{d} S(\mathbf{y})
\end{aligned}
\end{equation}

    \item For a boundary point $\mathbf{x}_0\in\partial\Omega_N$, the equation for the unknown $q_D$, $g_N$ is derived as:
\begin{equation}
\label{eq:bie-neumann-boundary}
\begin{aligned}
\frac{1}{2}g_N(\mathbf{x}_{0}) = & \int_{\partial \Omega_{D}} \frac{\partial \Phi\left(\mathbf{x}_{0}, \mathbf{y}\right)}{\partial \mathbf{n}} g(\mathbf{y}) \mathrm{d} S(\mathbf{y})
- \int_{\partial \Omega_{D}} \Phi\left(\mathbf{x}_{0}, \mathbf{y}\right) q_{D}(\mathbf{y}) \mathrm{d} S(\mathbf{y}) \\
& + \int_{\partial \Omega_{N}} \frac{\partial \Phi\left(\mathbf{x}_{0}, \mathbf{y}\right)}{\partial \mathbf{n}} g_{N}(\mathbf{y}) \mathrm{d} S(\mathbf{y})
- \int_{\partial \Omega_{N}} \Phi\left(\mathbf{x}_{0}, \mathbf{y}\right) q(\mathbf{y}) \mathrm{d} S(\mathbf{y})
\end{aligned}
\end{equation}
\end{itemize}
Thus, we have successfully transformed the partial differential equation problem defined in the interior domain into a boundary integral equation problem that is solved solely on the boundary. These two equations constitute a coupled system for solving the unknowns $q_D$ and $g_N$. Once this system is solved, the complete boundary information is obtained and the solution can be computed at any point in the domain using \eqref{eq:interior_exact}. For the exterior problem of \eqref{eq:harmonic_pde}, the derivation is similar.

\section{Methods}
\label{sec:BNM}

This section is structured in two parts. We begin by outlining the standard Boundary Neuron Method (BNM) and then introduce our principal contribution: the BNM with random feature. 
\subsection{Boundary neuron method}
\label{sec:bnm-shallow-network}
The shallow neural network constructs an approximate solution $u_M$ in the following form:
\begin{equation}
\label{eq:u_out}
u_{M}(\mathbf{x})=\sum_{j=1}^{M}\beta_{j}\phi_{j}(\mathbf{x})
\end{equation}
Here, $M$ represents the number of neurons, $\beta_j$ are the expansion coefficients and $\phi_j$ are neurons. The neurons are parameterized by internal weights $\mathbf{\omega}_{j}\in \mathbb{R}^{d}$ and biases $b_j\in\mathbb{R}$ and selecting a smooth activation function $\sigma:\mathbb{R} \to \mathbb{R}$ (e.g.,
tanh or cosine), their expression is:
\begin{equation}
    \phi_{j}(\mathbf{x})=\sigma\left(\left\langle\mathbf{\omega}_{j},\mathbf{x}\right\rangle+b_j\right)
\label{eq:activation}
\end{equation}
where $\left\langle\cdot,\cdot\right\rangle$ denotes the inner product of vectors.

This section presents the boundary neuron method. The core idea is to approximate the unknown function in the BIE using a shallow neural network:
\begin{equation}
\label{eq:q_D_and_g_N}    q_D(\mathbf{y})=\frac{\partial u_{M}}{\partial \mathbf{n}}(\mathbf{y}),\quad g_N(\mathbf{z})=u_{M}(\mathbf{z}),\quad
\text{where}\quad \mathbf{y}\in\partial\Omega_D,\quad \mathbf{z}\in\partial\Omega_N 
\end{equation}
It's important to note that when $\mathbf{y}\in\partial\Omega_D$, $u_M(\mathbf{y})$ and $g(\mathbf{y})$ may not be equal; and when $\mathbf{y}\in\partial\Omega_N$, $\frac{\partial u_M(\mathbf{y})}{\partial \mathbf{n}}$ and $q(\mathbf{y})$ may also not match. For further reference, one can refer to BINN \cite{binn_Sun2023}. We propose a novel boundary neuron method within a  random feature framework
\cite{chen2022bridging} for estimating the model parameters.  

\subsection{Boundary neuron method with random feature}

As mentioned above, the BNM ultimately requires the solving of the boundary integral equation \eqref{eq:interior_exact}. This section explains how to use random feature to efficiently solve this equation.

This method employs a shallow neural network that contains only a single hidden layer of $M$ neurons. The weights and biases $\left\{ \left( \mathbf{\omega}_{j}, b_{j} \right) \right\}_{j=1}^{M}$ are randomly sampled and fixed, defining a set of basis functions $\left\{ \phi_{j} \right\}_{j=1}^{M}$. Consequently, the final output of the network is formulated as a linear combination of these pre-defined basis functions as \eqref{eq:u_out}. Next, it is necessary to substitute  \eqref{eq:q_D_and_g_N} into the boundary integral equations \eqref{eq:bie-dirichlet-boundary} and
\eqref{eq:bie-neumann-boundary} to construct the linear system $\mathbf{A}\beta=\mathbf{b}$. Specifically, substituting these expressions into \eqref{eq:bie-dirichlet-boundary} for a Dirichlet boundary point $\mathbf{x_0}\in\partial\Omega_D$ yields 
\begin{equation*}
\begin{split}
    \frac{1}{2} g(\mathbf{x}_0)
    &-\int_{\partial\Omega_D} \frac{\partial\Phi(\mathbf{x}_0,\mathbf{y})}{\partial\mathbf{n}} g(\mathbf{y}) \, \mathrm{d}S(\mathbf{y})
    +\int_{\partial\Omega_N} \Phi(\mathbf{x}_0,\mathbf{y}) q(\mathbf{y}) \, \mathrm{d}S(\mathbf{y}) \\
    &= \sum_{j=1}^M \Bigg(
        -\int_{\partial\Omega_D} \Phi(\mathbf{x}_0,\mathbf{y}) \frac{\partial\phi_j}{\partial\mathbf{n}}(\mathbf{y}) \, \mathrm{d}S(\mathbf{y})
        +\int_{\partial\Omega_N} \frac{\partial\Phi(\mathbf{x}_0,\mathbf{y})}{\partial\mathbf{n}} \phi_j(\mathbf{y}) \, \mathrm{d}S(\mathbf{y})
        \Bigg) \beta_j
\end{split}
\end{equation*}
and into $\eqref{eq:bie-neumann-boundary}$ for a Neumann boundary point $\mathbf{x}_0\in\partial\Omega_N$ yields
\begin{equation*}
\begin{aligned}
\int_{\partial\Omega_D} \frac{\partial\Phi(\mathbf{x}_0,\mathbf{y})}{\partial\mathbf{n}} g(\mathbf{y}) \, \mathrm{d}S(\mathbf{y}) 
&-\int_{\partial\Omega_N} \Phi(\mathbf{x}_0,\mathbf{y}) q(\mathbf{y}) \, \mathrm{d}S(\mathbf{y}) \\
&= \sum_{j=1}^M \Bigg( \int_{\partial\Omega_D} \Phi(\mathbf{x}_0,\mathbf{y})  \frac{\partial\phi_j}{\partial\mathbf{n}}(\mathbf{y}) \, \mathrm{d}S(\mathbf{y}) 
 -\int_{\partial\Omega_N} \frac{\partial\Phi(\mathbf{x}_0,\mathbf{y})}{\partial\mathbf{n}}  \phi_j(\mathbf{y}) \, \mathrm{d}S(\mathbf{y})
+\frac12 \phi_j(\mathbf{x}_0) \Bigg)\beta_j
\end{aligned}
\end{equation*}
We enforce these equations at all boundary collocation points ${\{\mathbf{x}^{i}\}}_{i=1}^{N_s}$, where $N_s$ is the total number of collocation points on both the dirichlet and neumann boundaries.

\paragraph{Construction of Matrix \(\mathbf{A}\)} Each row of the matrix corresponds to a collocation point \(\mathbf{x}^i\) and a boundary integral equation. The element \(\mathbf{A}[i,j]\) represents the contribution of the \(j\)-th basis function \(\phi_j\)  at point \(\mathbf{x}^i\). Depending on the form of the equation, the contribution may come from either the basis function itself or its normal derivative.
\begin{itemize}
    \item If \(\mathbf{x}^i \in \partial \Omega_D\), we use the equation corresponding to the dirichlet boundary. In this case, the contribution of the basis function takes the form:
    \begin{equation}
    \label{eq:A_ij_d}
    \mathbf{A}[i, j] = -\int_{\partial \Omega_D} \Phi(\mathbf{x}^i, \mathbf{y}) \frac{\partial \phi_j}{\partial \mathbf{n}}(\mathbf{y}) \, \mathrm{d}S(\mathbf{y}) + \int_{\partial \Omega_N} \frac{\partial \Phi(\mathbf{x}^i, \mathbf{y})}{\partial \mathbf{n}} \phi_j(\mathbf{y}) \, \mathrm{d}S(\mathbf{y})
    \end{equation}

    \item If \(\mathbf{x}^i \in \partial \Omega_N\), we use the equation corresponding to the neumann boundary. In this case, the contribution takes the form:
    \begin{equation}
    \label{eq:A_ij_n}
    \mathbf{A}[i, j] = -\int_{\partial \Omega_D} \Phi(\mathbf{x}^i, \mathbf{y}) \frac{\partial \phi_j}{\partial \mathbf{n}}(\mathbf{y}) \, \mathrm{d}S(\mathbf{y}) + \int_{\partial \Omega_N} \frac{\partial \Phi(\mathbf{x}^i, \mathbf{y})}{\partial \mathbf{n}} \phi_j(\mathbf{y}) \, \mathrm{d}S(\mathbf{y})-\frac{1}{2}\phi_j(\mathbf{x}^i)
    \end{equation}
    \end{itemize}
  The final dimension of the matrix is \(N_s \times M\).

\paragraph{Construction of vector $\mathbf{b}$} Each component $\mathbf{b}[i]$ of the vector corresponds to the contribution of the known boundary conditions at the collocation point $\mathbf{x}^{i}$.

\begin{itemize}
  \item If $\mathbf{x}^{i}\in\partial\Omega_{D}$:
  \begin{equation}
  \label{eq:b_i_d}
  \mathbf{b}[i]=\frac{1}{2}g(\mathbf{x}^{i})-\int_{\partial\Omega_{D}}\frac{\partial\Phi(\mathbf{x}^{i},\mathbf{y})}{\partial \mathbf{n}}g(\mathbf{y})\mathrm{d}S(\mathbf{y})+\int_{\partial\Omega_{N}}\Phi(\mathbf{x}^{i},\mathbf{y})q(\mathbf{y})\mathrm{d}S(\mathbf{y})
  \end{equation}

  \item If $\mathbf{x}^{i}\in\partial\Omega_{N}$:
  \begin{equation}
  \label{eq:b_i_n}
  \mathbf{b}[i]=-\int_{\partial\Omega_{D}}\frac{\partial\Phi(\mathbf{x}^{i},\mathbf{y})}{\partial \mathbf{n}}g(\mathbf{y})\mathrm{d}S(\mathbf{y})+\int_{\partial\Omega_{N}}\Phi(\mathbf{x}^{i},\mathbf{y})q(\mathbf{y})\mathrm{d}S(\mathbf{y})
  \end{equation}
\end{itemize}
The integrals above are evaluated using numerical method. We can define a unified numerical quadrature operator $\mathcal{G}$ to describe the numerical approximation of the above integrals. For any boundary integral, this operator can be expressed as:

\begin{equation}
    \label{eq:gauss_quad}
    \int_{\partial\Omega}f(\mathbf{y})\mathrm{d}S(\mathbf{y})\approx\mathcal{G}[f]= \sum_{n_q=1}^{N_q}\omega_{n_q}f(\mathbf{y}_{n_q})
\end{equation}
Here, $\partial\Omega$ represents the boundary of the integration domain, $f(\mathbf{y})$ is the integrand function, and $N_q$ denotes the number of quadrature points. $\mathbf{y}_{n_q}$ represents the $n_q$-th quadrature point and $\omega_{n_q}$ is the corresponding quadrature weight.
In this work, Gaussian quadrature is adopted for its high accuracy in evaluating such integrals.

Through the above process, the problem of solving the system of boundary integral equations is transformed into a linear least squares problem $\mathbf{A}\beta = \mathbf{b}$ for solving coefficients $\beta$. After obtaining the optimal coefficient vector $\mathbf{\beta^{*}}$ by solving the linear system
$\mathbf{A\beta=b}$, the final predicted solution $\tilde{u}$ at any point $\mathbf{x}$ within the domain $\Omega$ is constructed by superimposing the contributions from the neural network approximation and the boundary condition terms. 
Specifically, we substitute $\beta^{*}$ into \eqref{eq:u_out} to obtain the trained neural network output $u_{M}^{*}(\mathbf{y})$. Then, by replacing $g_N$ and $q_D$ in equation \eqref{eq:interior_exact} with $u_{M}^{*}(\mathbf{y})$ and its normal derivative respectively, we obtain the final predicted solution
\begin{equation}
\label{eq:pred_omega_solution}
    \tilde{u}(\mathbf{x})=\int_{\partial\Omega_D} \frac{\partial\Phi\mathbf{(x, y)}}{\partial\mathbf{n}} g(\mathbf{y}) \, \mathrm{d}S(\mathbf{y}) - \int_{\partial\Omega_D} \Phi\mathbf{(x, y)} \frac{\partial u_{M}^{*}(\mathbf{y})}{\partial \mathbf{n}} \, \mathrm{d}S(\mathbf{y})+\int_{\partial\Omega_N} \frac{\partial\Phi\mathbf{(x, y)}}{\partial\mathbf{n}} u_{M}^{*}(\mathbf{y}) \, \mathrm{d}S(\mathbf{y})- \int_{\partial\Omega_N} \Phi\mathbf{(x, y)} q(\mathbf{y}) \, \mathrm{d}S(\mathbf{y})
\end{equation}

In summary, we conclude the algorithm of boundary neuron method with random feature in Algorithm \ref{algo:bnm-rf-scheme}.

\begin{algorithm}[ht]
\caption{Numerical scheme of boundary neuron method with random feature}
\label{algo:bnm-rf-scheme}

\KwIn{Boundary collocation point set ${\{\mathbf{x}^i\}}_{i=1}^{N_s}$, random weights and biases $\left\{ \left( \mathbf{\omega}_{j}, b_{j} \right) \right\}_{j=1}^{M}$, Quadrature weights and points $\left\{ \left( {\omega}_{n_q}, \mathbf{y}_{n_q} \right) \right\}_{n_q=1}^{N_q}$, fundamental function and its outward normal derivative $\Phi(\mathbf{x}, \mathbf{y})$ and $\frac{\partial \Phi(\mathbf{x}, \mathbf{y})}{\partial \mathbf{n}}$ \;}
\KwOut{Optimal coefficient vector  $\beta^*$, Neural network output $u^{*}_M(\mathbf{y})$, Final predicted solution $\tilde{u}(\mathbf{x})$\;}
Construct matrix $\mathbf{A} \in \mathbb{R}^{N_s \times M}$ through \eqref{eq:A_ij_d} and \eqref{eq:A_ij_n} by numerical quadrature \eqref{eq:gauss_quad} \;
Construct vector $\mathbf{b} \in \mathbb{R}^{N_s}$ through \eqref{eq:b_i_d} and \eqref{eq:b_i_n} by numerical quadrature \eqref{eq:gauss_quad} \;
Solve the linear least-squares problem $\mathbf{A}\beta=\mathbf{b}$ to obtain $\beta^*$ \;
Substitute $\beta^{*}$ into \eqref{eq:u_out} to obtain the trained neural network output $u_{M}^{*}(\mathbf{y})$\;
Replace $g_N$ and $q_D$ in equation \eqref{eq:interior_exact} with $u_{M}^{*}(\mathbf{y})$ and $\frac{\partial u_{M}^{*}(\mathbf{y})}{\partial \mathbf{n}}$ to obtain the final predicted solution $\tilde{u}(\mathbf{x})$
\end{algorithm}

\section{Convergence analysis on BNM-RF}
\label{sec:analysis}
In this section, we show the convergence analysis of our method. Our convergence analysis relies on the standard convergence analysis of kernel method in \cite{batlle2025error} and the kernel approximation by using random feature. Following the framework of \cite{liao2025solving}, where the convergence rate of the random feature method was analyzed for strong-form equations, we employ the similar approach to establish the convergence rate for the boundary neuron method with random feature.

\subsection{Elliptic Boundary Value Problem}
We have described the problem in Section \ref{sec:problem-description}. In this section, to simplify the discussion, we consider elliptic partial differential equations with Neumann boundary condition.  
Using the boundary integral method, the boundary value problem above can be transformed into a boundary integral equation. 

\begin{equation}
\label{eq:BIER}
    \frac{1}{2}u(\mathbf{x})-\int_{\partial\Omega} \frac{\partial
    \Phi(\mathbf{x},\mathbf{y})}{\partial \mathbf{n}(\mathbf{y})} \, u(\mathbf{y}) \, dS(\mathbf{y})=-\int_{\partial\Omega} { \Phi(\mathbf{x},\mathbf{y})} \, g_N(\mathbf{y}) \, dS(\mathbf{y}). \quad \mathbf{x}\in\partial\Omega
\end{equation}
Consider the second-kind Fredholm integral operator:
$$
\mathcal{T}(u) = \left( \frac{1}{2}\mathcal{I} - \mathcal{K} \right)(u)
$$
where $\mathcal{I}$ is the identity operator and $\mathcal{K}$ is a integral operator.
$$
\mathcal{K}(u)(\mathbf{x}) = \int_{\partial\Omega} \frac{\partial \Phi(\mathbf{x},\mathbf{y})}{\partial \mathbf{n}(\mathbf{y})} \, u(\mathbf{y}) \, dS(\mathbf{y}).\quad \mathbf{x}\in\partial\Omega
$$
The original problem \eqref{eq:BIER} can be transformed to
\begin{equation}
\mathcal{T}(u)(\mathbf{x}) = f(\mathbf{x}), \quad \mathbf{x} \in \partial\Omega
\end{equation}
with $f(\mathbf{x}) := -\int_{\partial\Omega} \Phi(\mathbf{x}, \mathbf{y}) g_{N}(\mathbf{y}) \, dS(\mathbf{y})$.\\
To represent this equation compactly, we define the boundary residual operator 
\begin{equation}
\label{eq:second-fredholm}
    \mathcal{B}[u](\mathbf{x})=\mathcal{T}(u)(\mathbf{x})-f(\mathbf{x})=0,\quad \mathbf{x}\in\partial\Omega
\end{equation}

\subsubsection{Error analysis of kernel-based method}
Recall the kernel-based method for solving PDEs in \cite{chen2021solving}\cite{batlle2025error}, a reproducing kernel Hilbert space (RKHS) $\mathcal{H}$  is chosen and we aim to solve the following
\begin{equation}
\label{eq:RKHS-mini}
\begin{aligned}
& \underset{u \in \mathcal{H}}{\text{minimize }} 
  && \|u\|_{\mathcal{H}} \\
& \text{s.t.} 
& && \mathcal{B}[u](\mathbf{x}_{j}) = 0, \quad 
     \text{for } j =  1, \dots, N_s
\end{aligned}
\end{equation}
We first state the main assumptions on the domain $\Omega$ and its boundary $\partial\Omega$, the operator
$\mathcal{T}$, and the reproducing kernel Hilbert space $\mathcal{H}$.
\begin{assumption}[Regularity of the domain and its boundary] $\Omega\in \mathbb{R}^n$ \label{ass:regularity}
with $d>1$ is a compact set and $\partial\Omega$ is a piecewise smooth connected Riemannian manifold of dimension $d-1$ endowed with a geodesic distance $\rho_{\Gamma}$.
\end{assumption}

\begin{assumption}[Invertibility of the boundary integral operator] 
\label{ass:Invertibility of operator}
We assume that the wavenumber \( k > 0 \) is such that \( k^2 \) is not an interior Dirichlet eigenvalue for \( -\Delta \) in \(\Omega \). Under this condition and the smoothness assumption on \(\partial\Omega\) (Assumption \ref{ass:regularity}), the boundary integral operator  
$\mathcal{T}$ is bijective with a bounded inverse (Theorem 3.8.7,     Remark 3.4.3,
\cite{sauter2010boundary}). Moreover, the following stability estimates hold: there exist constants $C > 0$, $\gamma > 0$
and  $s$
such that for any $r > 0$,
\begin{align*}
&\text{(i) } \|u_1 - u_2\|_{H^{\frac{1}{2}}(\partial\Omega)} \leq C \|\mathcal{T}(u_1) - \mathcal{T}(u_2)\|_{H^{\frac{1}{2}}(\partial\Omega)}, \quad \forall u_1, u_2 \in B_r(H^{\frac{1}{2}}(\partial\Omega)), \\
&\text{(ii) } \|\mathcal{T}(u_1) - \mathcal{T}(u_2)\|_{H^{\frac{1}{2}+\gamma}(\partial\Omega)} \leq C \|u_1 - u_2\|_{H^{\frac{1}{2}+\gamma}(\partial\Omega)}, \quad \forall u_1, u_2 \in B_r(H^{\frac{1}{2}+\gamma}(\partial\Omega)),
\end{align*}
where $B_r(X)$
denotes the closed ball of radius $r$  in the Banach space X .
\end{assumption}

\begin{assumption} \label{ass:embedded}
The RKHS $\mathcal{H}$ is continuously embedded in $H^{s}(\partial\Omega)$.
\end{assumption}
Assumption \ref{ass:regularity} is a standard requirement on the regularity of the solution domain and its boundary. It serves as the theoretical foundation for analyzing boundary integral equations. Assumption \ref{ass:Invertibility of operator} states the invertibility and stability of the boundary integral operator $\mathcal{T}$. This ensures the well-posedness of the solution.
Assumption \ref{ass:embedded} connects the norm of the Reproducing Kernel Hilbert Space
$\mathcal{H}$ with the norm of the Sobolev space $H^s(\partial\Omega)$ and  suggests that the reproducing kernel Hilbert space
$\mathcal{H}$ and its kernel function should be chosen to match the regularity of the solution $u$.
We are now ready to present the first theorem which concerns the convergence rate of kernel method.

\begin{theorem}
\label{thm:RKHS-solution-error}
Let $\hat{u}$ be a
minimizer of \eqref{eq:RKHS-mini} with collocation points $X_{\Gamma}$ on the boundary . Define the fill-in distances 
\begin{equation*}
    h_{\Gamma} := \sup_{\mathbf{x}^{\prime} \in \partial\Omega} \inf_{\mathbf{x} \in X_{\Gamma}} \rho_{\Gamma}(\mathbf{x}, \mathbf{x}^{\prime}),
\end{equation*}
and set $h=h_{\Gamma}$.
Then there exists a constant $h_0$ so that if $h<h_0$ then:
\begin{equation}
\|u - \hat{u}\|_{H^{\frac{1}{2}}(\partial\Omega)} \leq C h^\gamma \|u\|_{\mathcal{H}}
\end{equation}
where $C > 0$ is a constant independent of $h$ and $u$.
\end{theorem}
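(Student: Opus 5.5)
The argument follows the Batlle et al.\ template \cite{batlle2025error}: stability of $\mathcal{T}$ in $H^{1/2}$, boundedness of $\mathcal{T}$ in $H^{1/2+\gamma}$, and a sampling inequality on the boundary manifold. We implicitly need $s\ge \tfrac12+\gamma$ in Assumption \ref{ass:embedded}, so that $\mathcal{H}\hookrightarrow H^{1/2+\gamma}(\partial\Omega)$; this is the natural reading of that assumption.

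\textbf{Step 1: a priori bound on $\hat u$.} The true solution $u$ satisfies $\mathcal{B}[u](\mathbf{x}_j)=0$ for all $j$, so it is feasible for \eqref{eq:RKHS-mini}. Minimality then gives $\|\hat u\|_{\mathcal{H}}\le \|u\|_{\mathcal{H}}$, and hence $\|u-\hat u\|_{\mathcal{H}}\le 2\|u\|_{\mathcal{H}}$. By Assumption \ref{ass:embedded},
\[
\|u-\hat u\|_{H^{s}(\partial\Omega)}\le C\|u\|_{\mathcal{H}} .
\]
The same embedding shows that both $u$ and $\hat u$ lie in a ball $B_r$ of $H^{1/2+\gamma}(\partial\Omega)$ with $r=C\|u\|_{\mathcal{H}}$. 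This is what makes the local estimates (i) and (ii) of Assumption \ref{ass:Invertibility of operator} applicable.

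\textbf{Step 2: stability reduction.} Set $w:=\mathcal{T}(u)-\mathcal{T}(\hat u)$. Since $\mathcal{T}(u)=f$ and $\mathcal{B}[\hat u](\mathbf{x}_j)=0$, $w$ vanishes at every collocation point in $X_\Gamma$. Assumption \ref{ass:Invertibility of operator}(i) gives
\[
\|u-\hat u\|_{H^{1/2}}\le C\|w\|_{H^{1/2}},
\]
and Assumption \ref{ass:Invertibility of operator}(ii) gives
\[
\|w\|_{H^{1/2+\gamma}}\le C\|u-\hat u\|_{H^{1/2+\gamma}}\le C\|u\|_{\mathcal{H}}.
\]
So $w$ is a function of controlled $H^{1/2+\gamma}$ norm that is zero on $X_\Gamma$.

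\textbf{Step 3: sampling inequality (the main obstacle).} We need a zeros lemma on $\partial\Omega$: there exists $h_0$ such that for $h<h_0$ and any $w\in H^{t}(\partial\Omega)$ with $t=\tfrac12+\gamma>\tfrac{d-1}{2}$ and $w|_{X_\Gamma}=0$,
\[
\|w\|_{H^{1/2}(\partial\Omega)}\le C h^{t-1/2}\|w\|_{H^{t}(\partial\Omega)}=Ch^{\gamma}\|w\|_{H^{t}}.
\]
I would prove this by the standard route.
\begin{itemize}
\item Cover $\partial\Omega$ by finitely many smooth charts. Assumption \ref{ass:regularity} (piecewise smooth, with geodesic distance) gives bi-Lipschitz charts, so fill distances transfer up to constants.
\item Apply the Euclidean sampling inequality of Narcowich--Ward--Wendland / Arcangéli et al.\ on the Lipschitz parameter domains.
\item Glue the local estimates with a partition of unity, using that Sobolev norms on the manifold are equivalent to sums of chart norms.
\end{itemize}
The delicate points are the following.
\begin{itemize}
\item If the pointwise constraint is to make sense, we need $t>(d-1)/2$, or alternatively that $\mathcal{H}$ consists of continuous functions, with the inequality interpolated between $L^2$ and $H^{t}$ to reach the fractional order $1/2$.
\item The corners of a merely piecewise smooth boundary. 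Here I would either invoke the result on each smooth face (a Lipschitz patch) or simply assume enough smoothness; the threshold $h_0$ comes from the cone condition of these patches.
\end{itemize}

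\textbf{Conclusion.} Combining Steps 2 and 3 gives
\[
\|u-\hat u\|_{H^{1/2}}\le C\|w\|_{H^{1/2}}\le Ch^{\gamma}\|w\|_{H^{1/2+\gamma}}\le Ch^{\gamma}\|u\|_{\mathcal{H}},
\]
with $C$ depending only on the embedding, stability and sampling constants, and independent of $h$ and $u$.
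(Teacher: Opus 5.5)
Your proof is correct and is essentially the paper's argument. The paper verifies conditions (A1)--(A6) of the abstract framework (Theorem~\ref{thm:abstract_banach_spaces}) with $V_1=V_2=H^{1/2}(\partial\Omega)$ and $V_3=V_4=H^{1/2+\gamma}(\partial\Omega)$, using minimality of $\hat u$ for (A6) and the sampling inequality (Proposition~\ref{prop:A1}) applied to $\mathcal{B}(\hat u)-\mathcal{B}(u)$ for (A4) with $\varepsilon=Ch^\gamma$; you instead unroll that abstract theorem into the direct chain of estimates. Your interpolation remark is a small improvement, since Proposition~\ref{prop:A1} is stated only for integer $r$, whereas the paper applies it with $r=\tfrac12$ without comment.
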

\begin{proof}
  According to Theorem \ref{thm:abstract_banach_spaces}, we only need to verify six assumptions. If all assumptions are satisfied, the conclusion follows. We then choose the spaces $V_1 = V_2 = H^{\frac{1}{2}}(\partial\Omega)$, $V_3 = V_4 = H^{\gamma+\frac{1}{2}}(\partial\Omega)$ where we equip $V_2$ with the norm $\|f\|_{2} := \|f\|_{H^{\frac{1}{2}}(\partial\Omega)}$ and similarly for $V_3$ with the norm $\|f\|_{3}:= \|f\|_{H^{\gamma+\frac{1}{2}}(\partial\Omega)}$, and we take the operator ${\mathcal{T}}: u\mapsto{\mathcal{T}}(u)$. We can verify conditions (A1), (A2), and (A3) by the Assumption \ref{ass:regularity} - Assumption \ref{ass:embedded}. Condition (A6) is also satisfied since
  $\hat{u}$ is a minimizer of \eqref{eq:RKHS-mini}, let $\bar{g} = \mathcal{B}(\hat{u}) - \mathcal{B}(u)$ satisfy $\bar{g}(\mathbf{x}) = 0$ for all $\mathbf{x} \in X_{\Gamma}$ and so $\bar{g}\in H^{\gamma+\frac{1}{2}}(\partial\Omega)$ is zero on the set $X_{\Gamma}$. Then Proposition\ref{prop:A1} implies the existence of a constant $h_0>0$ so that whenever $h_{\Gamma}<h_0$ we have $\|\bar{g}\|_{H^{\frac{1}{2}}(\partial\Omega)} \leq C h_{\Gamma}^{\gamma} \|\bar{g}\|_{H^{\gamma+\frac{1}{2}}(\partial\Omega)}.$ This verifies (A4) with $\varepsilon \equiv Ch_\Gamma^{\gamma}$.
\end{proof}

\subsubsection{Random feature-RKHS error}
With the kernel minimizer $\hat{u} \in \mathcal{H}$ at hand, our next step is approximating $\hat{u}$ using random feature. We first adopt an alternative representation of preselected RKHS $\mathcal{H}$. 
Specifically, if the reproducing kernel $K$ of $\mathcal{H}$ is shift-invariant, and $K$ is continuous and positive definite, then by Bochner's theorem \cite{bochner2005harmonic}, the Fourier transform of kernel $K$ establishes a spectral measure $\rho(\omega)$, defined explicitly by
$$
\rho(\omega) = \int_{\mathbb{R}^d} \exp \bigl(-\mathrm{i}\langle \omega, \zeta \rangle\bigr) \, K(\zeta) \, d\zeta
$$
so that the kernel can be written in the expectation form
$$
K(\mathbf{x}, \mathbf{x}') = \int_{\mathbb{R}^d} \exp     \!\bigl(\mathrm{i}\langle \omega, \mathbf{x - x'} \rangle\bigr) \,
d\rho(\omega)
          = \mathbb{E}_{\omega \sim \rho}\!\bigl[\,\phi(\mathbf{x}, \omega)^* \, \phi(\mathbf{x'}, \omega)\,\bigr]
$$
where the feature map $\phi$ is taken as the random Fourier feature $\phi(\mathbf{x}, \omega) = \exp(\mathrm{i}\langle \omega, \mathbf{x} \rangle)$, or more commonly the  random cosine feature $\phi(\mathbf{x}, \omega) =  \cos(\langle \omega, \mathbf{x} \rangle + b)$. This leads to an equivalent description of $\mathcal{H}$ as the completion of the function space:
$$
\mathcal{F}(f) := \left\{ f(\mathbf{x}) = \int_{\mathbb{R}^{d}} b(\omega) \phi(\mathbf{x}, \omega) \, d\rho(\omega) : \|f\|_{\rho}^{2} = \mathbb{E}_{\omega}[b(\omega)^{2}] < \infty \right\}
$$
Recall Proposition 4.1 in \cite{rahimi2008uniform}, it is indeed the reproducing kernel Hilbert space $\mathcal{H}$ with associated kernel function $K$. The endowed norms $\|\cdot\|_{\mathcal{H}}$ and $\|\cdot\|_{\rho}$ are equivalent.

Examples of different random features $\phi$, together with their corresponding feature functions, parameter distributions, and kernel functions, can be found in \cite{rahimi2008uniform}.

In the present work, we adopt the Fourier-type random feature and use the cosine activation function $\sigma(\mathbf z)=\cos(\mathbf z)$. The frequencies \(\{\omega_j\}\) are sampled independently from \(\mathcal{N}(0,2\gamma I)\) in \(\mathbb{R}^d\), and the biases $\{b_j\}$ are sampled uniformly from $U[-\pi,\pi]$. This leads to the following shallow random feature network:
$$ u_M(\mathbf{x})=\sum_{j=1}^{M} \beta_{j} \phi_{j}(\mathbf{x})=\sum_{j=1}^{M} \beta_{j} \cos \left(\left\langle\mathbf{\omega}_{j}, \mathbf{x}\right\rangle+b_{j}\right), $$
and the coefficients $\beta = (\beta_{1}, \ldots, \beta_{M})^{\top}$ are determined via optimization.

We consider the under-parameterized
case, where the number of random feature $M$ is less than the number of collocation
points $N_s$. In this case, 
the random feature model is trained by solving the following optimization problem:
\begin{equation}
\min_{\beta \in \mathbb{R}^{N}} \sum_{j=1}^{N_{s}} \left| \mathcal{B}[u_{M}](\mathbf{x}_{j}) \right|^{2}
\label{eq:min_problem}
\end{equation}
In the over-parameterized case where the number of random feature $M$ is greater than the number of collocation points $N_s$,  the corresponding training problem becomes \cite{tikhonov1977solutions}:  
\begin{equation}
\label{eq:c-mini}
\begin{aligned}
& \underset{\beta\in \mathbb{R}^N}{\text{minimize }} 
  && \|\beta\|_{2}^2 \\
& \text{s.t.} 
& && \mathcal{B}[u_M](\mathbf{x}_{j}) = 0, \quad 
     \text{for } j =  1, \dots, N_s
\end{aligned}
\end{equation}
This study focuses on the under-parameterized case. The over-parameterized problem will not be discussed here.

The following theorem provides the theoretical guarantee for how well this finite random feature model can approximate a function in $\mathcal{F}$:
\begin{theorem}[Theorem 2,\cite{liao2025solving}]\label{thm:random-feature-approx}
Let $f$ be a function from $\mathcal{F}(f)$. Suppose that the random feature map $\phi$ satisfies $|\phi(\mathbf{x}, \omega)| \leq 1$ for all $\mathbf{x} \in X$ and $\omega \in \mathbb{R}^{d}$. Then for any $\delta \in (0,1)$, there exist coefficients $\beta_{1}^{\sharp}, \ldots, \beta_{M}^{\sharp}$ so that the function
\begin{equation}\label{eq:rf-model}
f^{\sharp}(\mathbf{x}) =  \frac{1}{M}\sum_{j=1}^{M} \beta_{j}^{\sharp} \phi(\mathbf{x}, \omega_{j})
\end{equation}
satisfies
\begin{equation}\label{eq:error-bound}
|f(\mathbf{x}) - f^{\sharp}(\mathbf{x})| \leq \frac{12 \|f\|_{\rho} \log(2/\delta)}{\sqrt{M}}
\end{equation}
with probability at least $1 - \delta$ over $\omega_{1}, \ldots, \omega_{N}$ drawn i.i.d from $\rho(\omega)$.
\end{theorem}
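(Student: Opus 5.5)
The natural approach is a Monte Carlo construction followed by a bounded-difference concentration argument, as in the random feature literature (Rahimi--Recht). Write $f(\mathbf{x}) = \mathbb{E}_{\omega\sim\rho}[b(\omega)\phi(\mathbf{x},\omega)]$ with $\|f\|_\rho^2=\mathbb{E}_\omega[b(\omega)^2]<\infty$. Given i.i.d. samples $\omega_1,\dots,\omega_M\sim\rho$, the obvious candidate coefficients are $\beta_j^\sharp := b(\omega_j)$, so that $f^\sharp$ in \eqref{eq:rf-model} is the empirical average of the random functions $g_j(\mathbf{x}) := b(\omega_j)\phi(\mathbf{x},\omega_j)$. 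Each $g_j$ has mean $f$. Since $|\phi|\le 1$, we have $\mathbb{E}|g_j(\mathbf{x})|^2\le \|f\|_\rho^2$ pointwise in $\mathbf{x}$.

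The key steps, in order, are as follows.

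\emph{Variance step.} Fix $\mathbf{x}$ (or take the $L^2(\mu)$ norm over $X$ for a probability measure $\mu$). Independence gives
\[
\mathbb{E}\,|f(\mathbf{x})-f^\sharp(\mathbf{x})|^2 \le \frac{\|f\|_\rho^2}{M},
\]
so by Jensen's inequality $\mathbb{E}\,|f-f^\sharp|\le \|f\|_\rho/\sqrt{M}$.

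\emph{Concentration step.} Upgrade this expectation bound to a high-probability bound with the $\log(2/\delta)$ factor. My first choice is a Hilbert-space Bernstein/Pinelis-type inequality for the sum of i.i.d. centered random vectors $g_j-f$. Alternatively, apply McDiarmid's inequality to $Z=\|f-f^\sharp\|$: replacing one $\omega_j$ changes $Z$ by at most $\frac{2}{M}\sup|b|$.

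\emph{Combination step.} Combine the expectation bound with the deviation bound and collect constants. The crude estimates (a factor 2 from centering, a factor from $\sqrt{\log}$ bounded by $\log(2/\delta)$ for $\delta<1$, and so on) should produce the constant $12$. This matches the form of Theorem 2 in \cite{liao2025solving}, so one could also simply invoke that result once the hypotheses are checked. Here those hypotheses are $|\phi|\le 1$, which holds for the cosine feature, and membership of $f$ in $\mathcal{F}$.

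The main obstacle is the concentration step. McDiarmid needs $b$ bounded, but $\mathcal{F}$ only assumes $b\in L^2(\rho)$, so the bounded-difference constant is not controlled by $\|f\|_\rho$ alone. I would first try a truncation: split $b=b\mathbf{1}_{|b|\le T}+b\mathbf{1}_{|b|>T}$, apply Bernstein to the bounded part, handle the tail by Chebyshev/Markov, and choose $T\sim\|f\|_\rho\sqrt{M}/\log(2/\delta)$. If the resulting constants do not close, I would fall back to median-of-means style reasoning, which only needs second moments. That approach yields the $\log(2/\delta)$ dependence directly, at the price of a worse absolute constant; the constant $12$ would then be absorbed.
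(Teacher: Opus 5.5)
The paper does not prove this statement. It imports it from \cite{liao2025solving}, so your remark that one could simply invoke that result is exactly what the paper does. As a self-contained proof, however, your argument has a genuine gap at the concentration step, and the truncation you describe does not repair it. With $\beta_j^\sharp=b(\omega_j)$ and only $\mathbb{E}_\omega[b(\omega)^2]<\infty$, the estimator itself can violate the bound. Here is an example:
\begin{itemize}
\item Take $X=[0,1]$, $\rho$ uniform on $[0,1]$, and $\phi(x,\omega)=\mathbf{1}_{\{\omega\le x\}}$. Then $f(x)=\int_0^x b$, so $b=f'$ is the only representer.
\item Let $b=p^{-1/2}\mathbf{1}_{[0,p]}$ with $p=2\delta/M$. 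Then $\|f\|_\rho=1$ and $f(1)=\sqrt{p}$.
\item With probability $1-(1-p)^M\ge 1-e^{-2\delta}>\delta$ (for $\delta\le 1/2$), some $\omega_j\le p$. On that event $f^\sharp(1)\ge (M\sqrt{p})^{-1}$.
\item Hence $|f(1)-f^\sharp(1)|\ge M^{-1/2}\bigl((2\delta)^{-1/2}-(2\delta)^{1/2}\bigr)$. For small $\delta$ this exceeds $12\log(2/\delta)/\sqrt{M}$; at $\delta=10^{-6}$ the comparison is roughly $707$ versus $174$, in units of $M^{-1/2}$.
\end{itemize}
If you split $b$ only inside the analysis of this same estimator, the tail sum $\frac{1}{M}\sum_j b(\omega_j)\mathbf{1}_{\{|b(\omega_j)|>T\}}\phi(\mathbf{x},\omega_j)$ is still random, and it is exactly what this example exploits. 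Markov or Chebyshev control it only with $\delta^{-1}$ or $\delta^{-1/2}$ dependence, never with $\log(2/\delta)$.

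The missing idea is to truncate the coefficients themselves. This is allowed, since the theorem only asserts that some $\beta^\sharp$ exists. Take $\beta_j^\sharp=b_T(\omega_j)$ with $b_T:=b\,\mathbf{1}_{\{|b|\le T\}}$.
\begin{itemize}
\item The discarded part now contributes only a deterministic bias: $\bigl|\mathbb{E}[(b-b_T)(\omega)\phi(\mathbf{x},\omega)]\bigr|\le\mathbb{E}\bigl[|b|\mathbf{1}_{\{|b|>T\}}\bigr]\le\|f\|_\rho^2/T$.
\item The retained summands are bounded by $T$ and have second moment at most $\|f\|_\rho^2$, so a Bernstein inequality applies.
\item In $L^2(\mu)$, for any probability measure $\mu$ on $X$, use the Hilbert-space version (Pinelis; Caponnetto--De Vito). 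If $\|\xi\|\le H/2$ and $\mathbb{E}\|\xi\|^2\le\sigma^2$, the empirical mean deviates by at most $2(H/M+\sigma/\sqrt{M})\log(2/\delta)$ with probability $1-\delta$.
\item With $H=2T$, $\sigma=\|f\|_\rho$ and $T=\|f\|_\rho\sqrt{M}$, this gives $\|f-f^\sharp\|_{L^2(\mu)}\le(6\log(2/\delta)+1)\|f\|_\rho/\sqrt{M}$. This is at most $12\|f\|_\rho\log(2/\delta)/\sqrt{M}$ because $\log(2/\delta)>\log 2$.
\item Scalar Bernstein gives the same conclusion at any fixed $\mathbf{x}$.
\end{itemize}
So the constant $12$ comes out directly. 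It cannot simply be ``absorbed'' as you suggest. Median-of-means is also unnecessary; at a fixed $\mathbf{x}$ it would additionally make the selected group, and hence the coefficients, depend on $\mathbf{x}$.

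Finally, state explicitly that you prove the fixed-$\mathbf{x}$ or the $L^2(\mu)$ version. Neither McDiarmid nor Bernstein at fixed $\mathbf{x}$ gives the bound for all $\mathbf{x}$ simultaneously. A uniform bound with a constant independent of $X$ in fact fails for general features with $|\phi|\le 1$. For instance, take $\rho$ uniform on $[0,1]$, $b\equiv 1$, and $\phi(F,\omega)=\mathbf{1}_{\{\omega\notin F\}}$ indexed by finite sets $F\subset[0,1]$. Then $f\equiv 1$, yet every $f^\sharp$ vanishes at $F=\{\omega_1,\dots,\omega_M\}$. The $L^2(\mu)$ version, with $\mu$ the normalized surface measure, is what the paper actually needs when it integrates over $\partial\Omega$ and picks up the factor $\sqrt{\operatorname{vol}(\partial\Omega)}$.
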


\subsubsection{The final error}
According to Theorem \ref{thm:RKHS-solution-error} and \ref{thm:random-feature-approx}, we can now derive an upper bound for the error on the boundary.
\begin{theorem}
Suppose that the conditions in theorem \ref{thm:RKHS-solution-error} and \ref{thm:random-feature-approx} hold. Then for any $\delta \in (0, 1)$ there exists $\beta_1^m, \ldots, \beta_M^m$ such that the function
$$
u_M(\mathbf{x}) = \sum_{j=1}^{M} \beta_j^m \phi(\mathbf{x}, \omega_j)
$$
satisfies 
$$
\left\|u-u_M\right\|_{L^{2}(\partial\Omega)}\leq Ch^\gamma\|u\|_{\mathcal{H}}+\frac{12\|u\|_{\mathcal{H}}\log (2/\delta)\sqrt{\operatorname{vol}(\partial\Omega)}}{\sqrt{M}}
$$
with probability at least $1-\delta$ over $\omega_1, \ldots, \omega_N$ drawn independently and identically distributed (i.i.d) from $\rho(\omega)$. 
\end{theorem}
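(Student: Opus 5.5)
The plan is to split the error with the kernel minimizer $\hat u$ from Theorem \ref{thm:RKHS-solution-error} as an intermediate function:
\[
\|u-u_M\|_{L^2(\partial\Omega)}\le \|u-\hat u\|_{L^2(\partial\Omega)}+\|\hat u-u_M\|_{L^2(\partial\Omega)} .
\]
Each term is then controlled by one of the two earlier theorems.

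\textbf{First term.} The $L^2(\partial\Omega)$ norm is dominated by the $H^{\frac12}(\partial\Omega)$ norm, since the embedding $H^{\frac12}(\partial\Omega)\hookrightarrow L^2(\partial\Omega)$ has norm at most one with the standard Sobolev–Slobodeckij definition. Theorem \ref{thm:RKHS-solution-error} then gives, for $h<h_0$,
\[
\|u-\hat u\|_{L^2(\partial\Omega)}\le Ch^\gamma\|u\|_{\mathcal H}.
\]

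\textbf{Second term.} Since $\hat u\in\mathcal H=\mathcal F$, Theorem \ref{thm:random-feature-approx} applies with $f=\hat u$. The cosine feature satisfies $|\phi|\le 1$, so with probability at least $1-\delta$ there are $\beta_j^\sharp$ with
\[
\sup_{\mathbf x\in\partial\Omega}|\hat u(\mathbf x)-\hat u^\sharp(\mathbf x)|\le \frac{12\|\hat u\|_\rho\log(2/\delta)}{\sqrt M}.
\]
I set $\beta_j^m:=\beta_j^\sharp/M$, so that $u_M=\hat u^\sharp$; this absorbs the $1/M$ normalization. Integrating the squared pointwise bound over $\partial\Omega$ yields
\[
\|\hat u-u_M\|_{L^2(\partial\Omega)}\le \frac{12\|\hat u\|_\rho\log(2/\delta)\sqrt{\operatorname{vol}(\partial\Omega)}}{\sqrt M}.
\]

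\textbf{Main obstacle: replacing $\|\hat u\|_\rho$ by $\|u\|_{\mathcal H}$.} The exact solution $u$ itself satisfies the constraints $\mathcal B[u](\mathbf x_j)=0$ in \eqref{eq:RKHS-mini}. Because $\hat u$ minimizes the $\mathcal H$-norm over that feasible set, $\|\hat u\|_{\mathcal H}\le\|u\|_{\mathcal H}$. Two further points need care.
\begin{itemize}
\item \emph{Constant in the norm equivalence.} I would identify $\|\cdot\|_\rho$ with $\|\cdot\|_{\mathcal H}$ via the isometric description of the RKHS in \cite{rahimi2008uniform}, Proposition 4.1. This makes the equivalence constant $1$ and gives exactly the stated numeric factor $12$. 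If only equivalence up to a constant is available, the constant is absorbed and the $12$ becomes $12c$. I regard this as the delicate point.
\item \emph{Regularity of $\hat u$.} The pointwise bound must hold on $\partial\Omega$, i.e.\ $X\supseteq\partial\Omega$. This requires that $\hat u$ and the features be defined on a neighbourhood of $\partial\Omega$ in $\mathbb R^d$, which is true for the shift-invariant kernel construction.
\end{itemize}

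Combining the two bounds gives the claimed estimate, holding with probability at least $1-\delta$. The only randomness enters through the second term.
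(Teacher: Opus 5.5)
Your proposal is correct and takes essentially the same route as the paper. The paper also uses the triangle inequality through the kernel minimizer $\hat u$. It bounds the first term by Theorem~\ref{thm:RKHS-solution-error}, together with $\|\cdot\|_{L^2(\partial\Omega)}\le\|\cdot\|_{H^{1/2}(\partial\Omega)}$, and the second by integrating the pointwise bound of Theorem~\ref{thm:random-feature-approx} over $\partial\Omega$ and using $\|\hat u\|_{\mathcal H}\le\|u\|_{\mathcal H}$. The details you make explicit are exactly the steps the paper uses without comment: feasibility of $u$ in \eqref{eq:RKHS-mini}, absorbing the $1/M$ into $\beta_j^m$, and identifying $\|\cdot\|_\rho$ with $\|\cdot\|_{\mathcal H}$, which holds with constant one when $\|f\|_\rho$ is taken as the infimum over representing densities $b$.
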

\begin{proof}
Using the triangle inequality, we decompose the error as 
\begin{equation*}
    \left\|u-u_M\right\|_{L^{2}(\partial\Omega)}\leq\|u-\hat{u}\|_{L^{2}(\partial\Omega)}+\left\|\hat{u}-u_M\right\|_{L^{2}(\partial\Omega)}
\end{equation*}
 We apply theorem \ref{thm:RKHS-solution-error} to bound  $\|u-\hat{u}\|_{L^{2}(\partial\Omega)}$. The second term is bounded as 
\begin{equation}
    \|\hat{u}-u_M\|_{L^{2}(\partial\Omega)} = \sqrt{\int_{\partial\Omega}|\hat{u}(\mathbf{x})-u_M(\mathbf{x})|^{2} d S(\mathbf{x})} 
\leq \frac{12\|\hat{u}\|_{\mathcal{H}}\log(2/\delta)\sqrt{\operatorname{vol}(\partial\Omega)}}{\sqrt{M}} 
\leq \frac{12\|u\|_{\mathcal{H}}\log(2/\delta)\sqrt{\operatorname{vol}(\partial\Omega)}}{\sqrt{M}}
\end{equation}
\end{proof}
Based on the estimate of the error on the boundary, we now extend the analysis to derive an error bound within the entire domain $\Omega$. This is achieved through the representation formula:
\begin{equation}
    u(\mathbf{x})-\tilde{u}(\mathbf{x})=\int_{\partial\Omega} \frac{\partial G(\mathbf{x},\mathbf{y})}{\partial \mathbf{n}(\mathbf{y})} \, (u(\mathbf{y})-u_M(\mathbf{y})) \, dS(\mathbf{y}),\quad \mathbf{x}\in\Omega
\end{equation}
which expresses the interior error entirely in terms of the error on the boundary. For a Lipschitz boundary $\partial\Omega$, the double-layer potential operator $\mathcal{D}$ is continuous from $L^{2}(\partial\Omega)$ to $H^{\frac{1}{2}}(\Omega)$ (Theorem 1, \cite{costabel1988boundary}). Consequently, there exists a constant $\hat{C}>0$ depending on the wavenumber k and the domain $\Omega$, such that
$$
\|u-\tilde{u}\|_{L^{2}(\Omega)} \leq\|u-\tilde{u}\|_{H^{\frac{1}{2}}(\Omega)}\leq \hat{C} \|u-u_M\|_{L^{2}(\partial\Omega)}.
$$
This inequality confirms the error analysis, as it establishes the convergence result from the boundary to the entire solution domain.

\section{Numerical results}
\label{sec:result}
In this section, we present numerical results for the BNM-RF. In the first experiment, we use the cosine activation function to validate the theoretical convergence rate presented in Section \ref{sec:analysis}. The subsequent experiments  compare the performance of BNM-RF against BINN \cite{binn_Sun2023} and BEM (implemented with constant element) \cite{hall1994boundary}, in these comparative studies, the tanh activation function is adopted. We focus on two types of problems: potential problems governed by the Poisson equation and acoustic wave problems governed by the Helmholtz equation. The accuracy of a trained model is measured by the relative  error $L^{2}$-norm $\frac{\| u - u^{*} \|_{2}}{\| u^{*} \|_{2}}$, where $u$ is the numerical solution, and $u^{*}$ is the reference solution.
 
 \subsection{Interior problem for the Helmholtz equation}
 This experiment is designed to verify the theoretical convergence rate analysis from Section \ref{sec:analysis}, which applies to the interior Helmholtz equation with Neumann boundary condition.
 The specific equations are as follows:
\begin{equation} 
        \label{eq:interior_helmholtz}
        \begin{split}
            - \Delta u(\mathbf{x}) - k^2 u(\mathbf{x}) = 0 &, \quad \mathbf{x} \in \Omega, \\
            \frac{\partial u(\mathbf{x})}{\partial \mathbf{n}} = q(\mathbf{x})&, \quad \mathbf{x} \in \partial \Omega,
        \end{split}
    \end{equation}
where $\Omega=[0,1]\times[0,1]$ and $\mathbf{x} = [x_1, x_2]$,
The analytical solution to this problem is given by the following:
\begin{equation}
\label{eq:helmholt_interior_1}
u(\mathbf{x})=\exp({{\mathrm{i}(k_1x_1+k_2x_2)}}),\quad (k_1,k_2)=(k\cos(\frac{\pi}{7}),k \sin(\frac{\pi}{7})),
\end{equation} 
In the experiments, the square computational domain is considered, whose boundary \(\partial\Omega\) is a Lipschitz boundary. Following the theoretical framework established in Section \ref{sec:analysis}, the regularity index is set to \(\gamma = 0.5\) \cite{costabel1988boundary}\cite{Grisvard1985EllipticPI}, and accordingly the variance parameter is taken as \(2\gamma = 1\). To reduce the effects of random parameter initialization in neural networks, each set of parameter configurations was independently repeated four times, and the final error was averaged over the four results.

We first examined the performance of the BNM-RF at wavenumber $k$=9. The specific experimental parameters were set as follows: the number of hidden neurons $M$=40, and collocation points $N_s$=60 were uniformly selected on the boundary. As shown in Figure \ref{fig:neuman-1}, the BNM-RF prediction agrees well with the exact solution in terms of both the overall spatial distribution and the oscillatory pattern. The error is mainly concentrated near the boundary and becomes significantly smaller in the interior of the domain. This error distribution is consistent with the typical numerical behavior of boundary integral methods in the near-boundary region, indicating that BNM-RF can accurately capture the interior wave field.

\begin{figure}[htbp]
  \centering
    \begin{subfigure}{0.3\textwidth}
    \includegraphics[width=\linewidth]{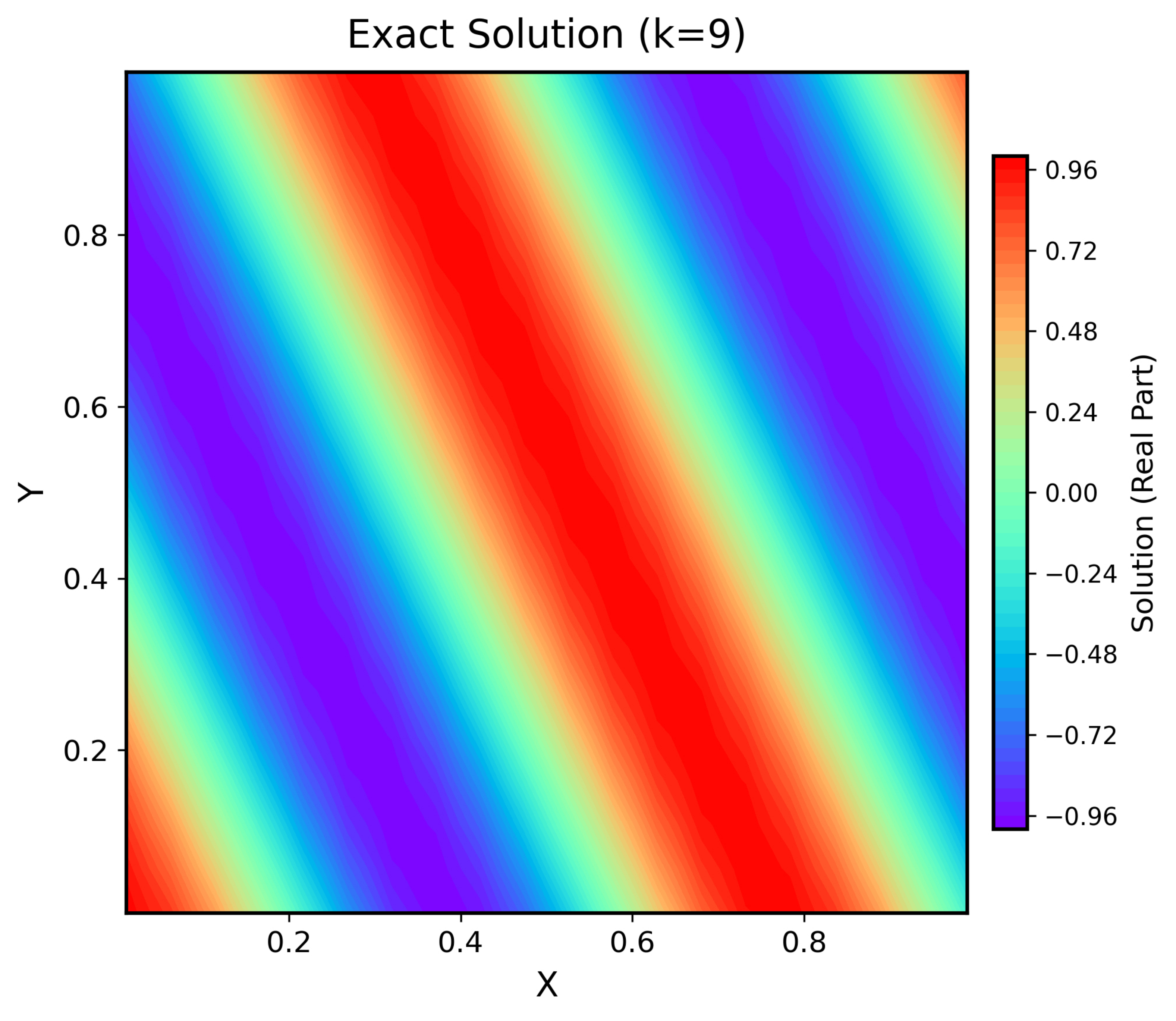}
    \caption{The true solution with k=9 }
  \end{subfigure}
  \hfill
    \begin{subfigure}{0.3\textwidth}
    \includegraphics[width=\linewidth]{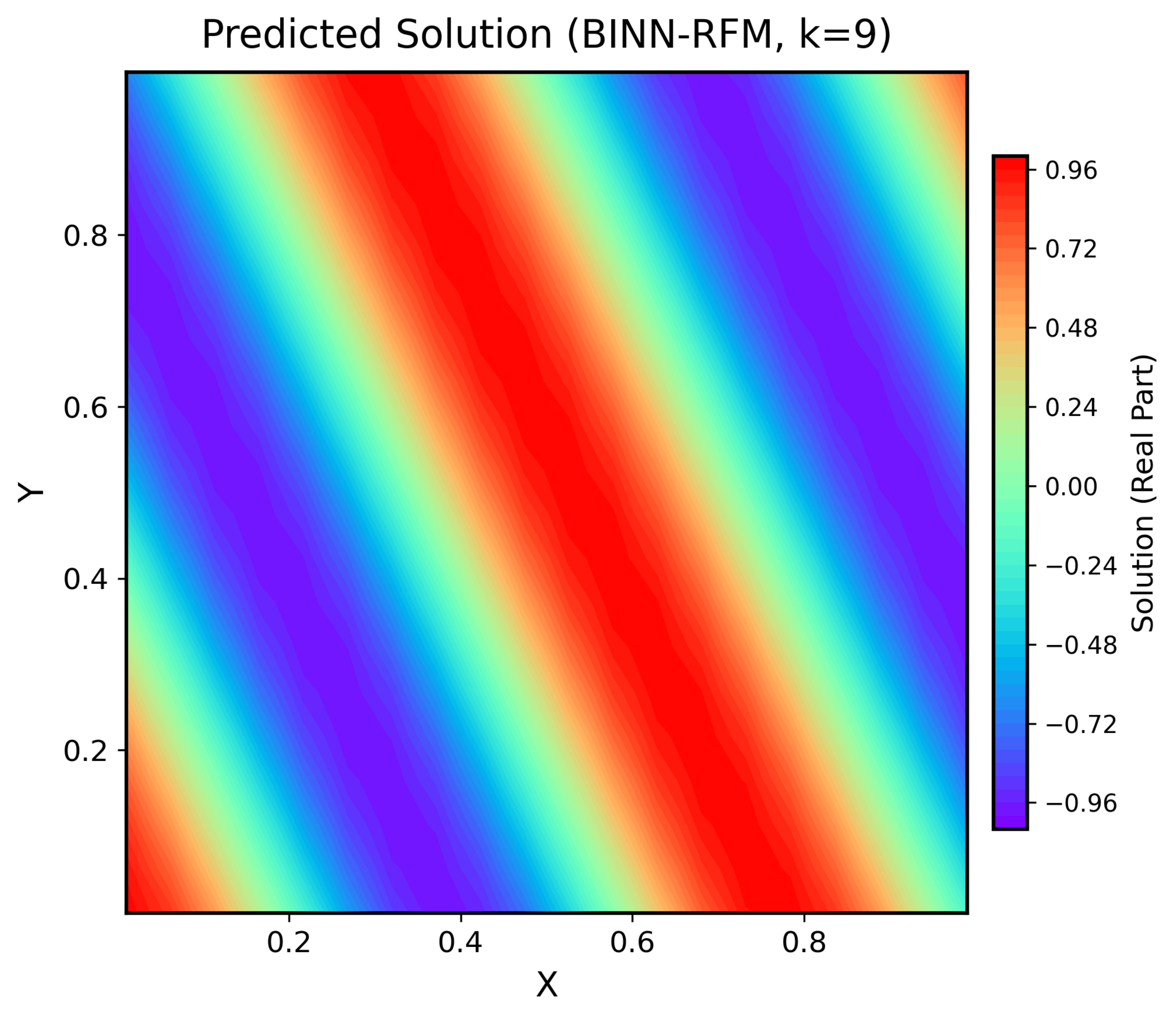}
    \caption{The predicted solution with k=9 }
  \end{subfigure}
  \hfill
    \begin{subfigure}{0.3\textwidth}
    \includegraphics[width=\linewidth]{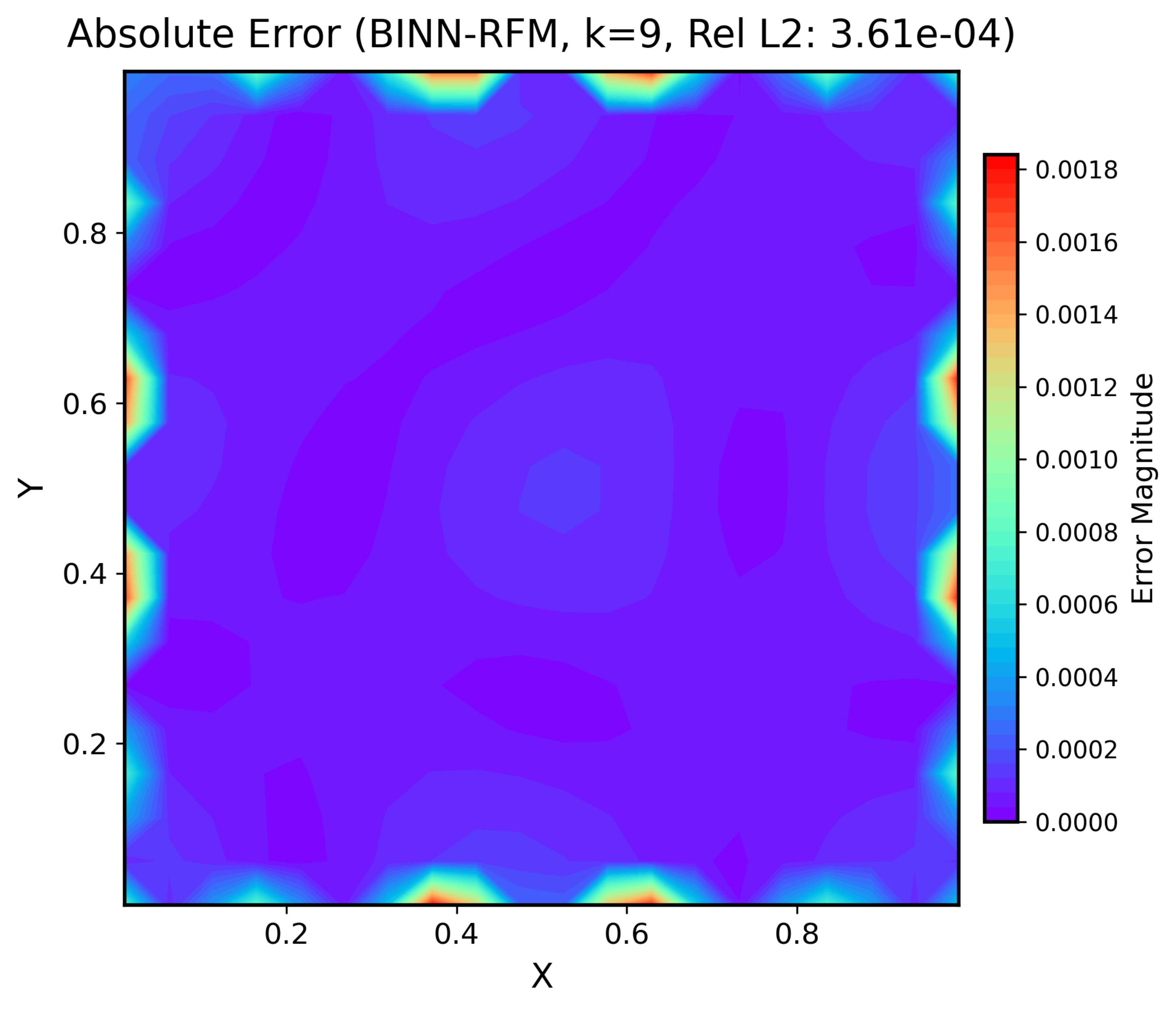}
    \caption{The error plot with k=9 }
  \end{subfigure}
  \caption{Numerical results for the case $k$=9 with $N_s$=60 and $M$=40: (a) true solution; (b) predicted solution; (c) error distribution. }
\label{fig:neuman-1}
\end{figure}

To systematically analyze the convergence rate of the BNM-RF and compare it with the theoretical error bound, we further test the variation of error with increasing number of neurons $M$ and with increasing number of collocation points $N_s$ under different wavenumbers, as shown in Figure \ref{fig:convergence-effect}. The result clearly demonstrate that the actual convergence speed of BNM-RF is significantly faster than the theoretically predicted rate, highlighting the efficiency of the method. Moreover, by comparing with the convergence curves of the conventional boundary element method, BNM-RF requires fewer degrees of freedom to achieve the same accuracy, verifying its performance advantage over traditional boundary element method.

\begin{figure}[htbp]
  \centering
    \begin{subfigure}{0.3\textwidth}
    \includegraphics[width=\linewidth]{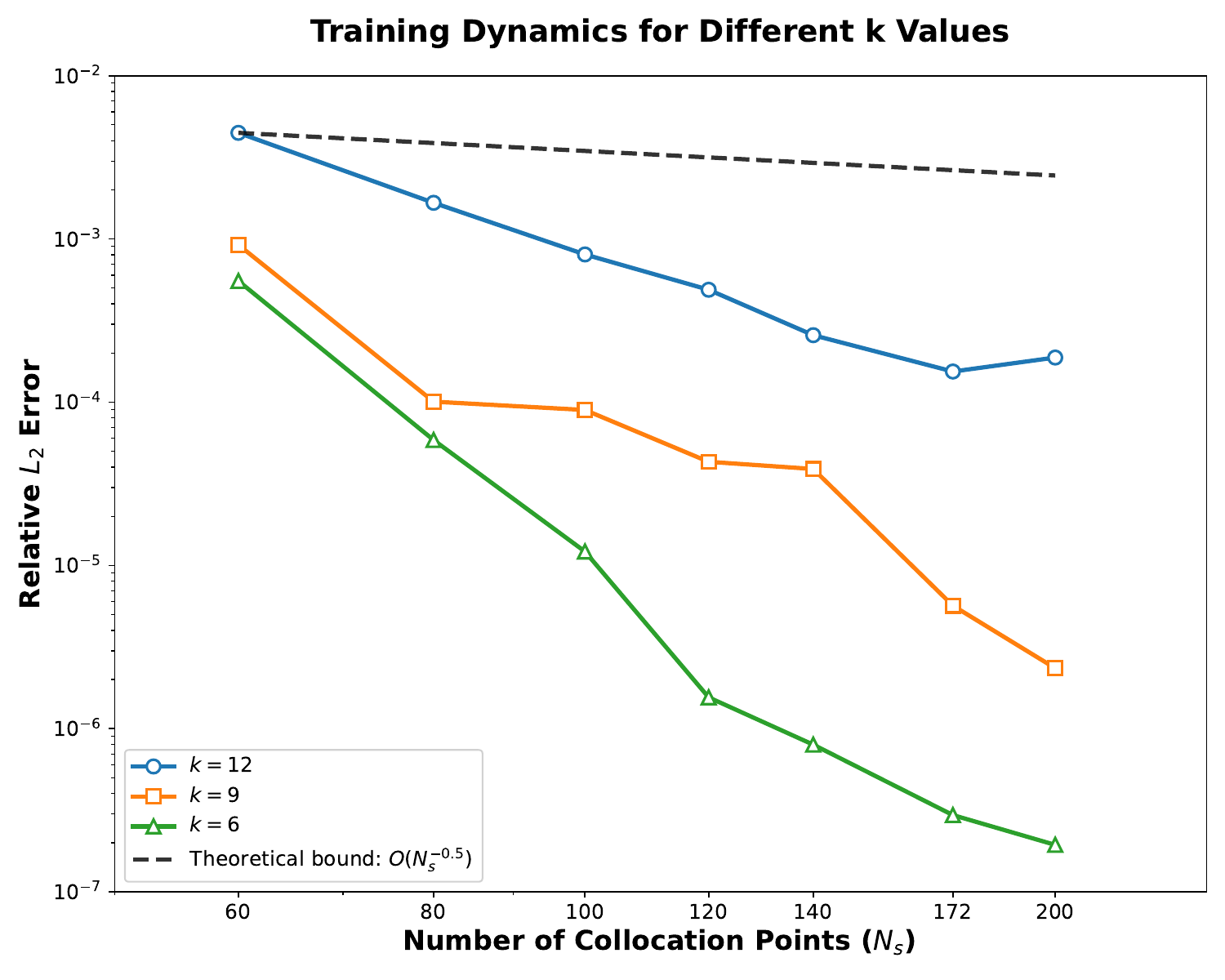}
    \caption{Training Dynamics with collocation points }
  \end{subfigure}
  \hfill
    \begin{subfigure}{0.3\textwidth}
    \includegraphics[width=\linewidth]{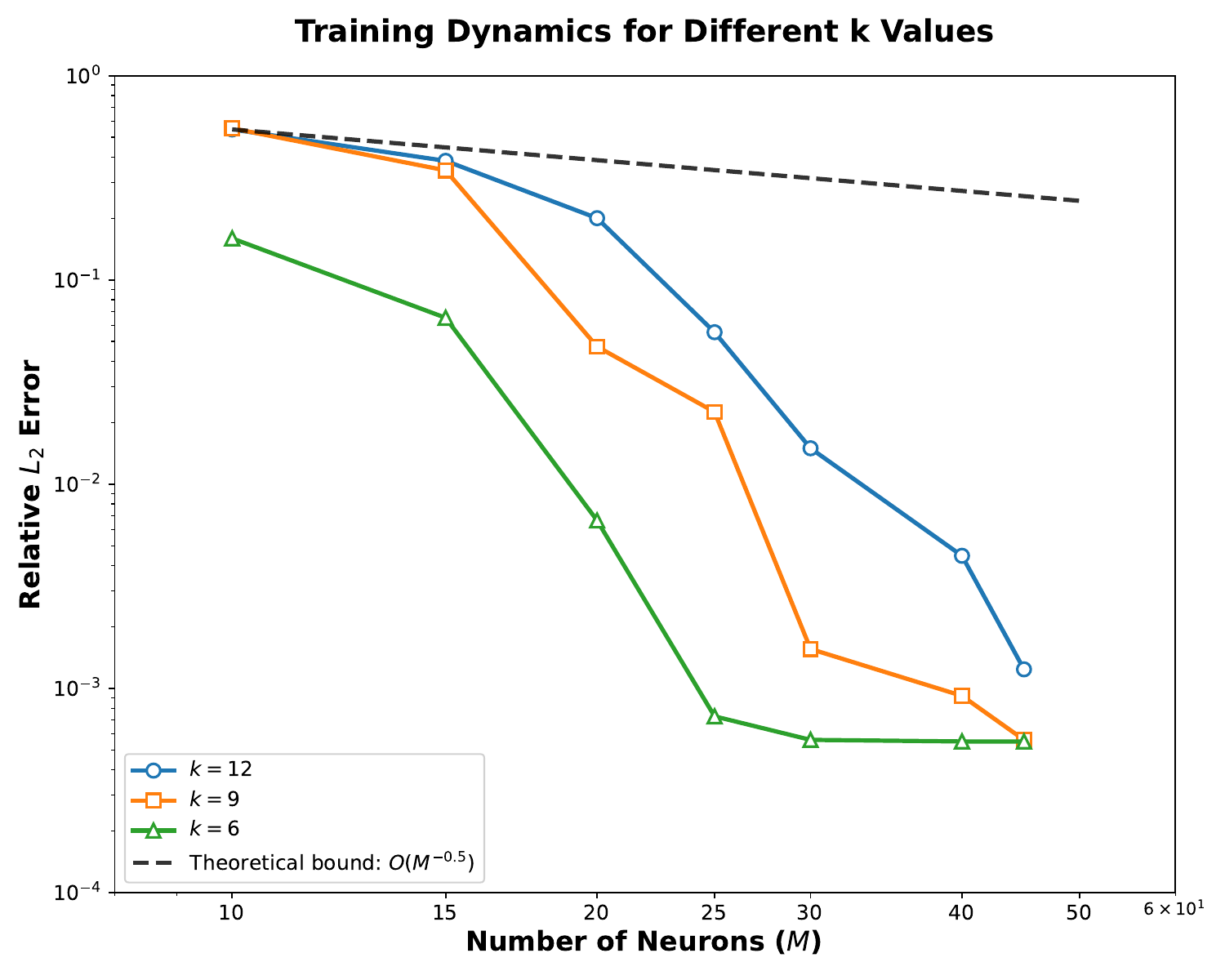}
    \caption{Training Dynamic with neurons }
  \end{subfigure}
  \hfill
    \begin{subfigure}{0.3\textwidth}
    \includegraphics[width=\linewidth]{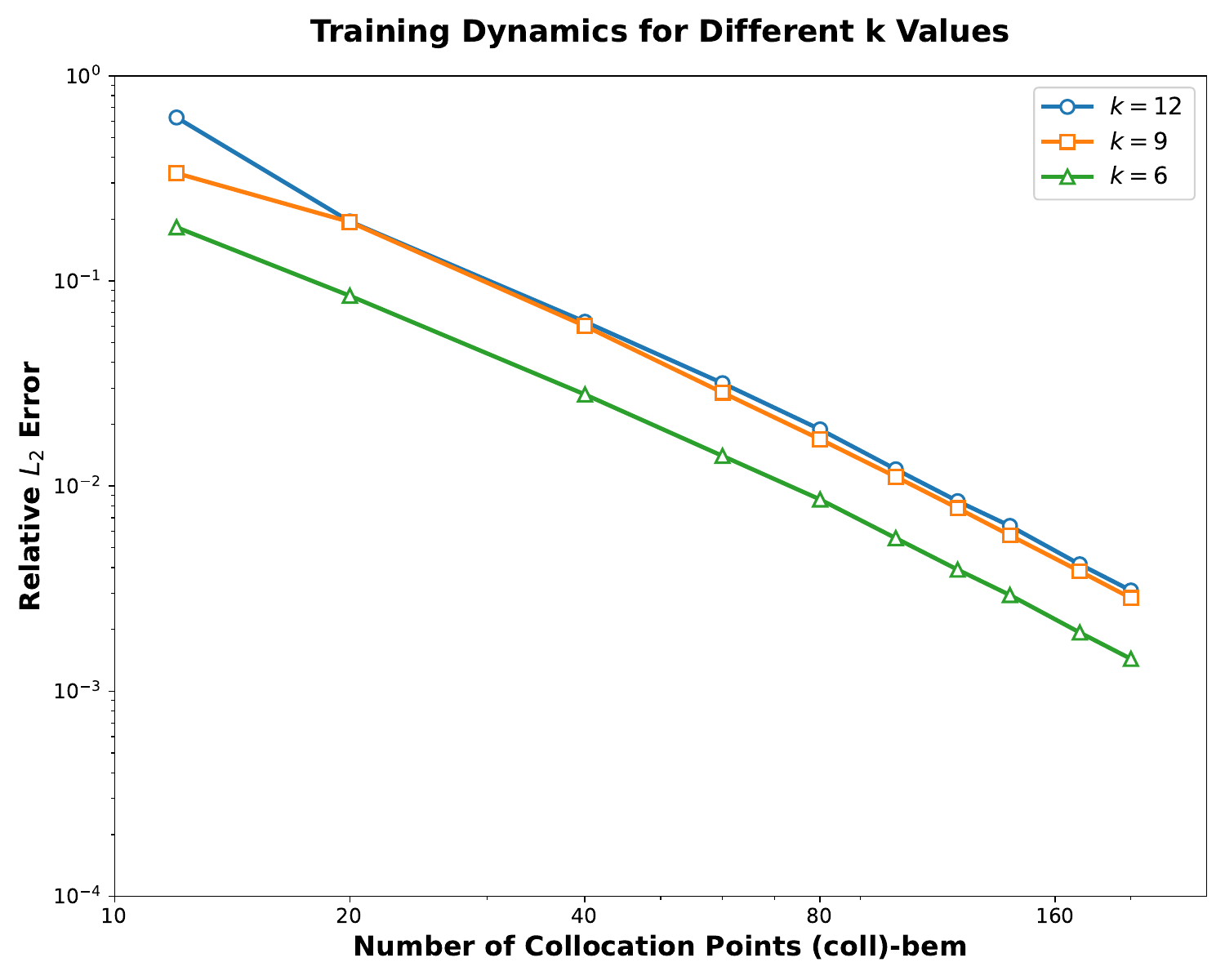}
    \caption{Training Dynamics with bem }
  \end{subfigure}
  \caption{Illustrates the training dynamics through three subplots: (a) variation of the error with the number of collocation points with 40 fixed neurons for different k values; (b) variation of the error with the number of neurons under 60 fixed collocation points for different k values; (c) variation of the error with the number of collocation points for different k values with bem.}
\label{fig:convergence-effect}
\end{figure}

\subsection{Laplace equation}
Since transforming partial differential equations into boundary integral equations only requires integration over the boundary, this approach is well suited for handling problems with complex-shaped boundaries, so we consider the following 2D Poisson problem defined on the flower-shaped region with Dirichlet boundary condition:
\begin{equation}
\label{eq:laplace_eq_1}
\begin{split}
 - \Delta u(\mathbf{x})  = 0 &, \quad \mathbf{x} \in \Omega, \\
            u(\mathbf{x}) = \bar{u}(\mathbf{x})&, \quad \mathbf{x} \in \partial \Omega
\end{split}
\end{equation}
where $\bar{u}(\mathbf{x})$ is given by the analytical solution:
\begin{equation}
\label{eq:laplace_eq_2}
\begin{split}
u(\mathbf{x})=\sin(x_1)\sinh(x_2)+\cos(x_1)\cosh(x_2).
\end{split}
\end{equation}
The flower-shaped region is composed of five semicircles with radius r = 1. For these three methods, 100 collocation points are uniformly allocated on the boundary, and the boundary integrals (including the regular integrals and the regularized singular integrals) are evaluated piecewise using the Gaussian quadrature rule with 10 quadrature points. For the BINN, we configure a fully connected neural network that includes 4 hidden layers, each consisting of 140 neurons. The neural network is trained for 20000 iterations. And for the BNM-RF, this neural network is made up of a hidden layer containing 60 neurons. We show the solution by different methods in Figure~\ref{fig:laplace_sol}, the relative $L^2$ error of these methods is shown in Table \ref{tab:error_comparison}.

\begin{figure}[ht]
    \centering
        \begin{subfigure}[b]{0.32\textwidth}         \centering
        \includegraphics[width=\linewidth]{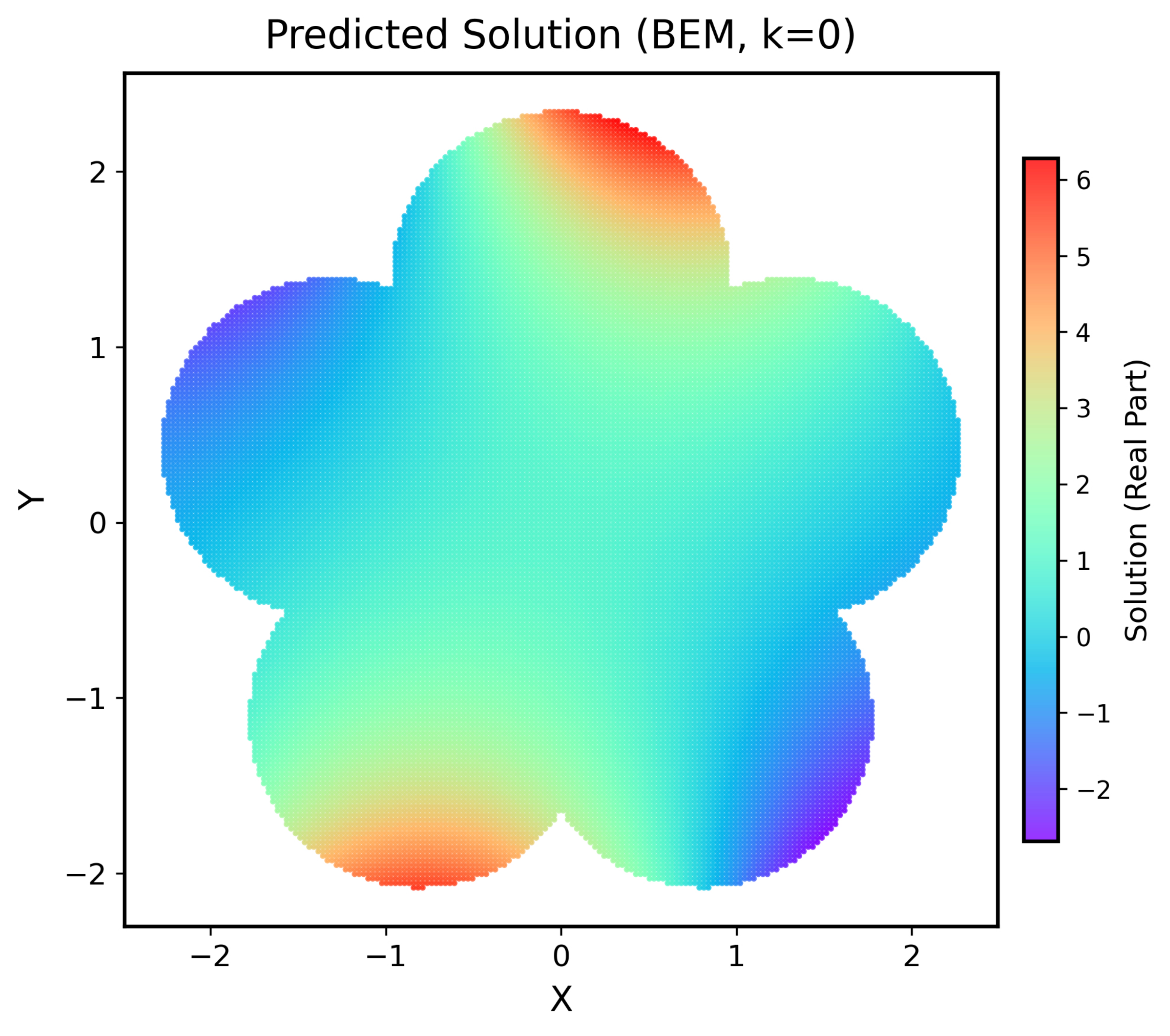}         \caption{Predicted solution by BEM.}
        \label{fig:laplace_flower_bem_pred}
    \end{subfigure}
    \hfill
    \begin{subfigure}[b]{0.32\textwidth}
        \centering
        \includegraphics[width=\linewidth]{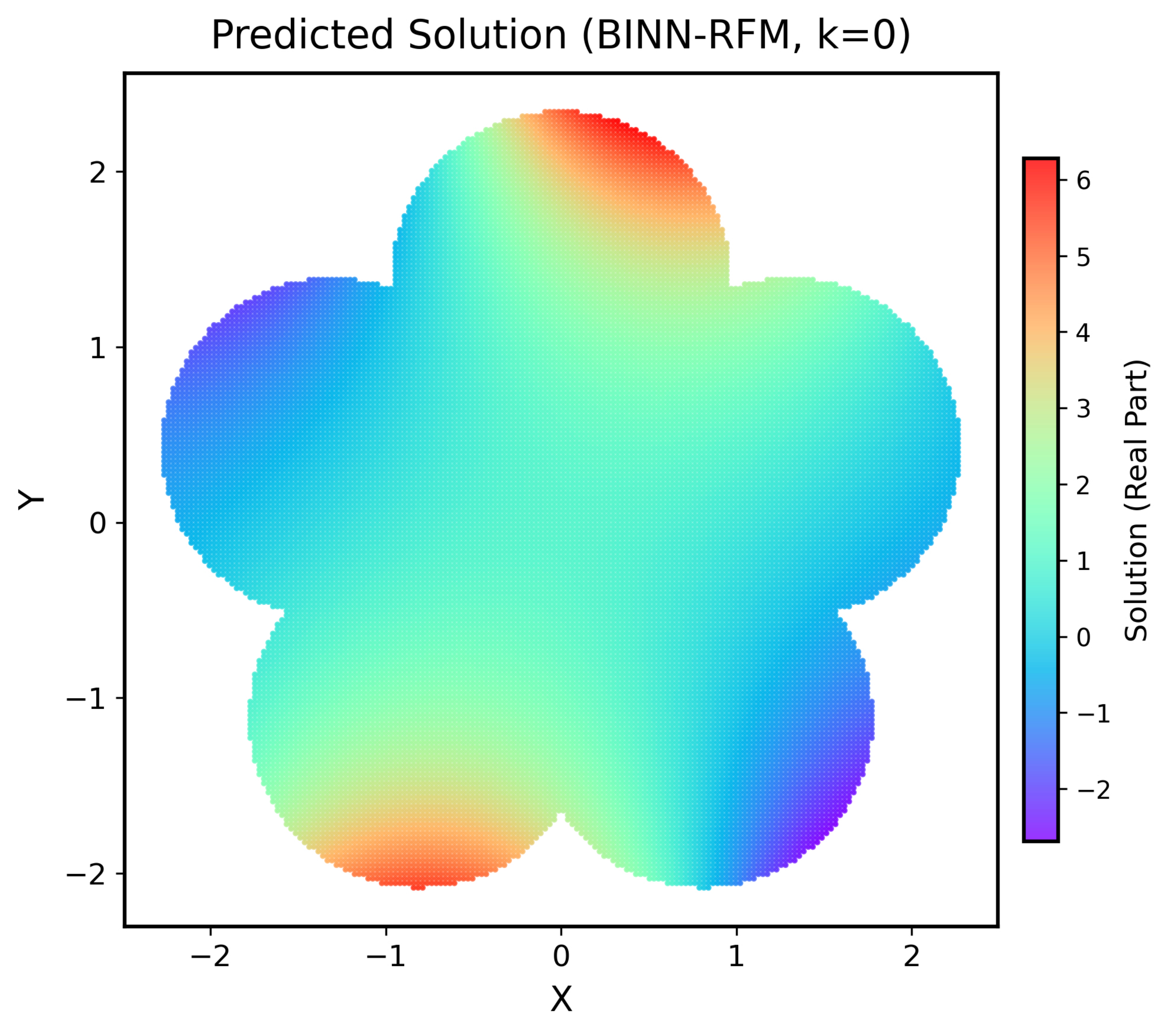}         \caption{Predicted solution by BNM-RF.}
        \label{fig:laplace_flower_bnm_pred}
    \end{subfigure}
    \hfill
    \begin{subfigure}[b]{0.32\textwidth}
        \centering
        \includegraphics[width=\linewidth]{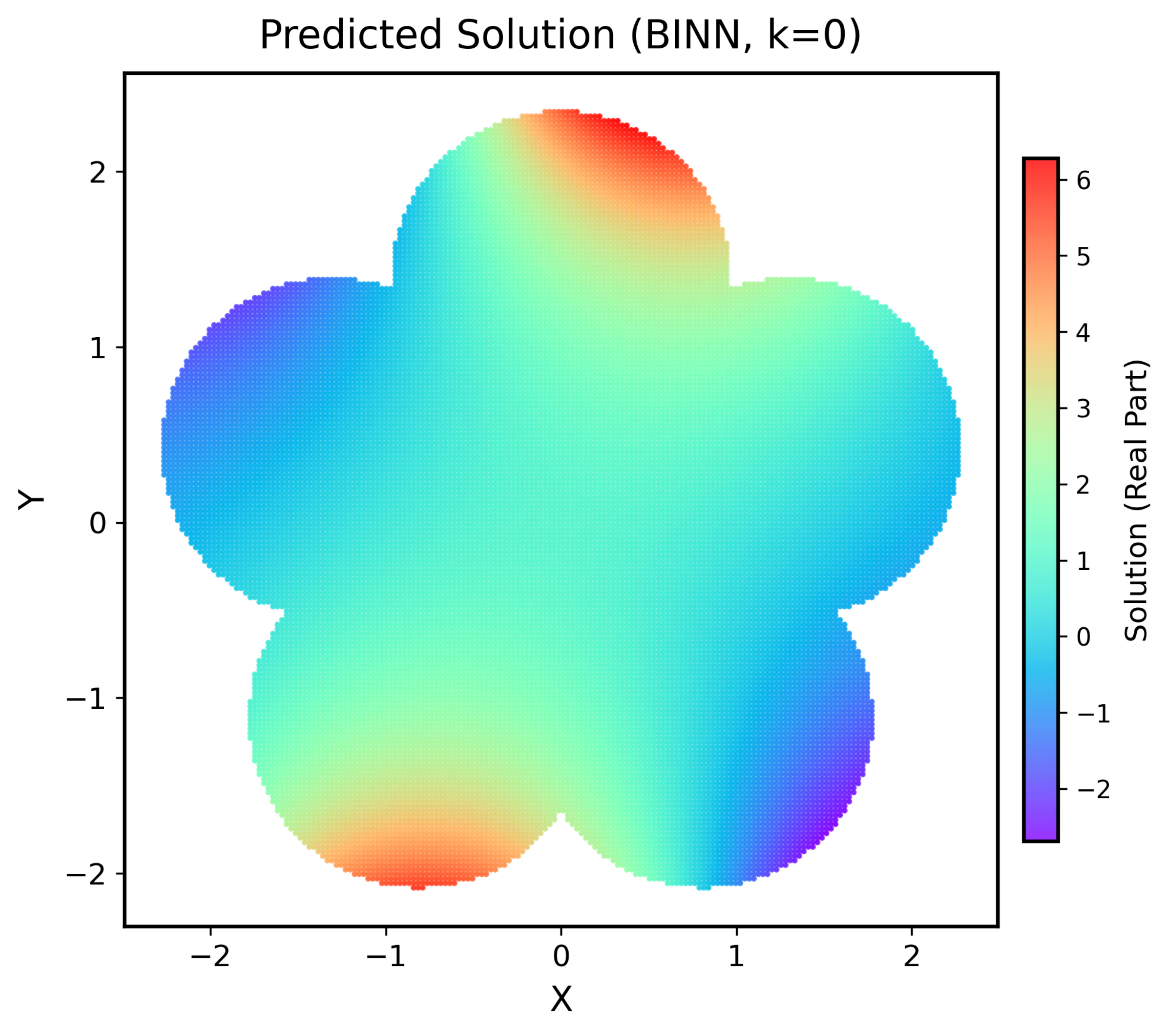}         \caption{Predicted solution by BINN.}
        \label{fig:laplace_flower_binn_pred}
    \end{subfigure}
    
    \vspace{0.5cm}     
        \begin{subfigure}[b]{0.32\textwidth}
        \centering
        \includegraphics[width=\linewidth]{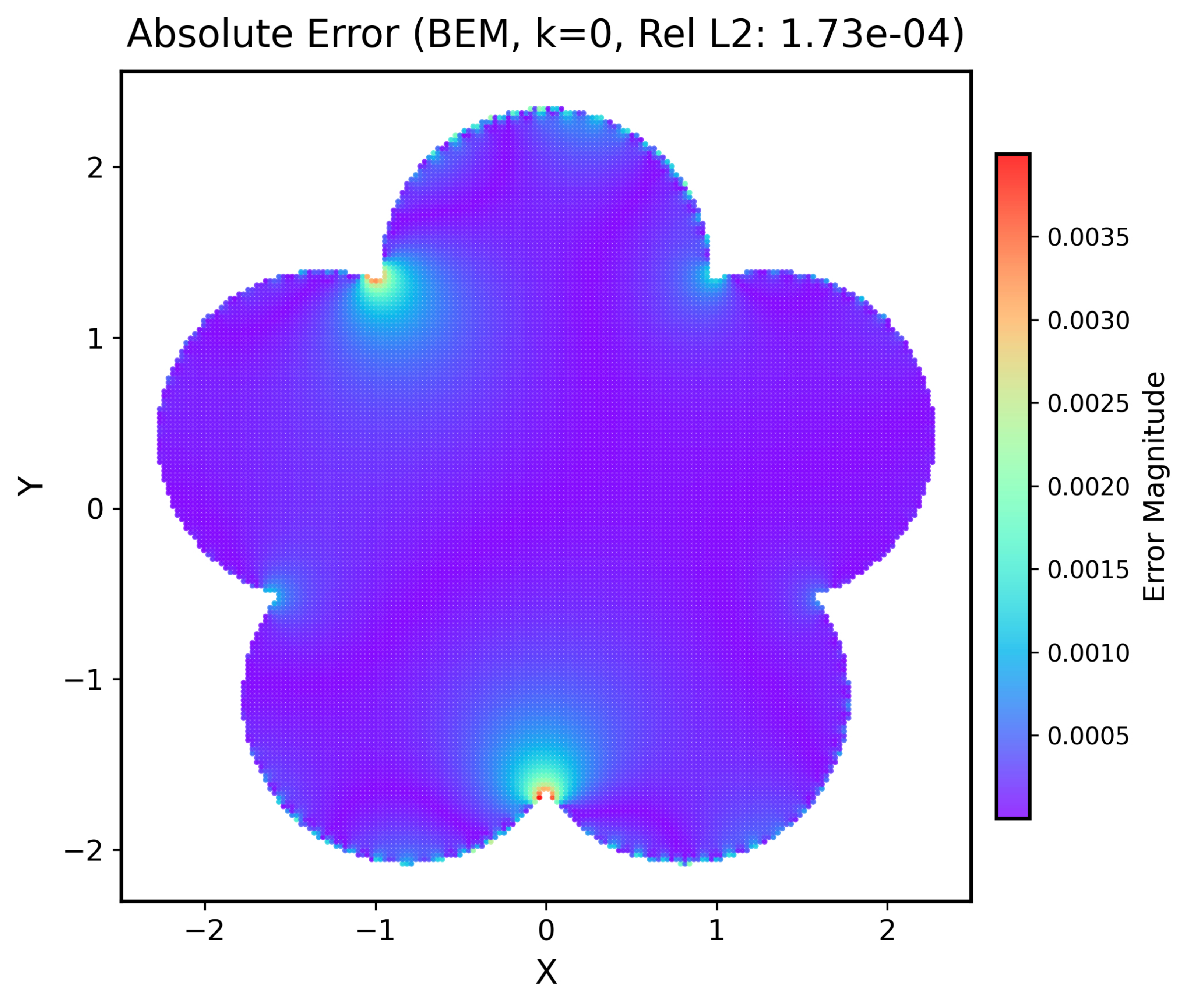}
        \caption{Error plot of the BEM method.}
        \label{fig:flower_bem_error}
    \end{subfigure}
    \hfill
    \begin{subfigure}[b]{0.32\textwidth}
        \centering
        \includegraphics[width=\linewidth]{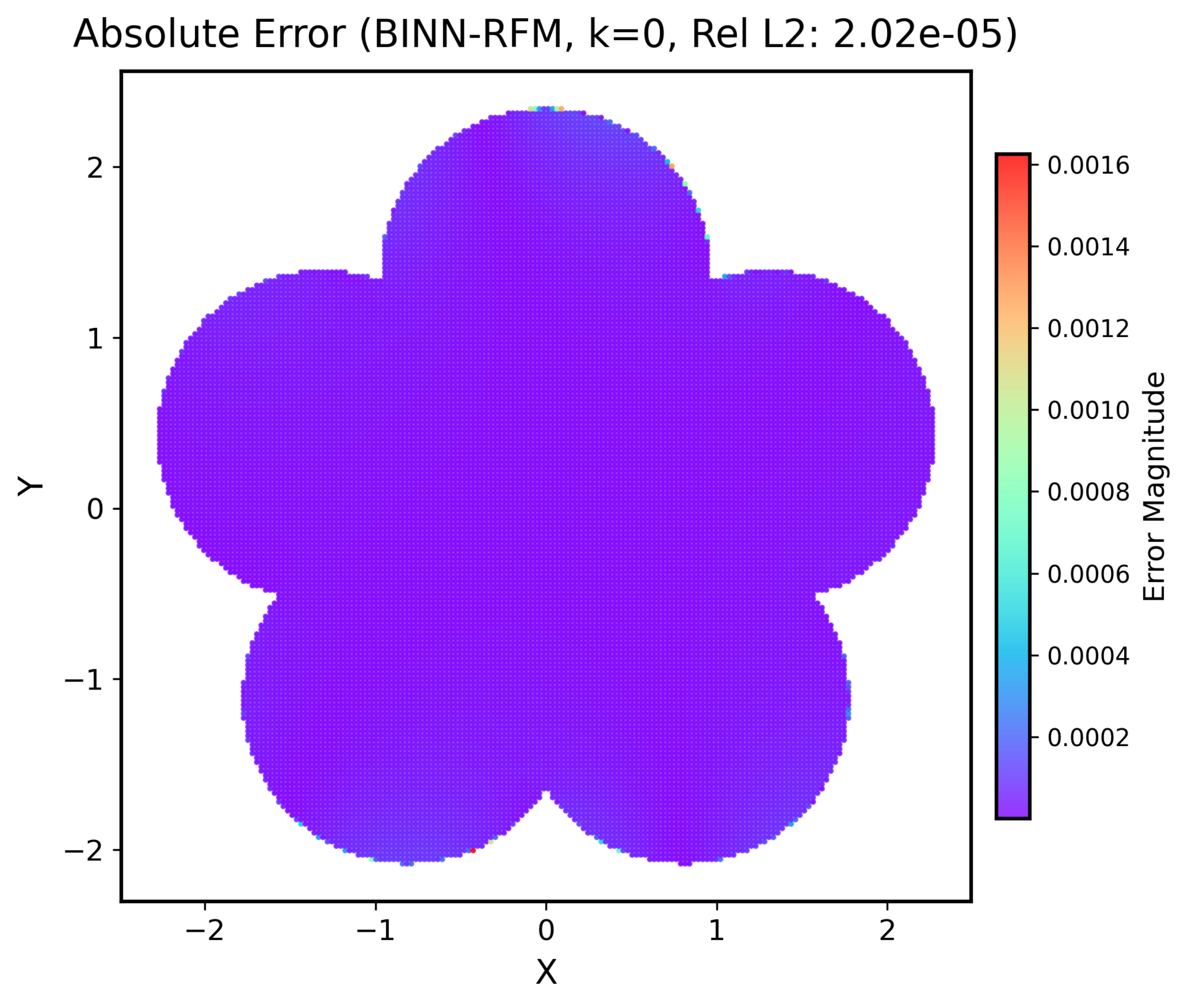}
        \caption{Error plot of the BNM-RF.}
        \label{fig:flower_bnm_error}
    \end{subfigure}
    \hfill
    \begin{subfigure}[b]{0.32\textwidth}
        \centering
        \includegraphics[width=\linewidth]{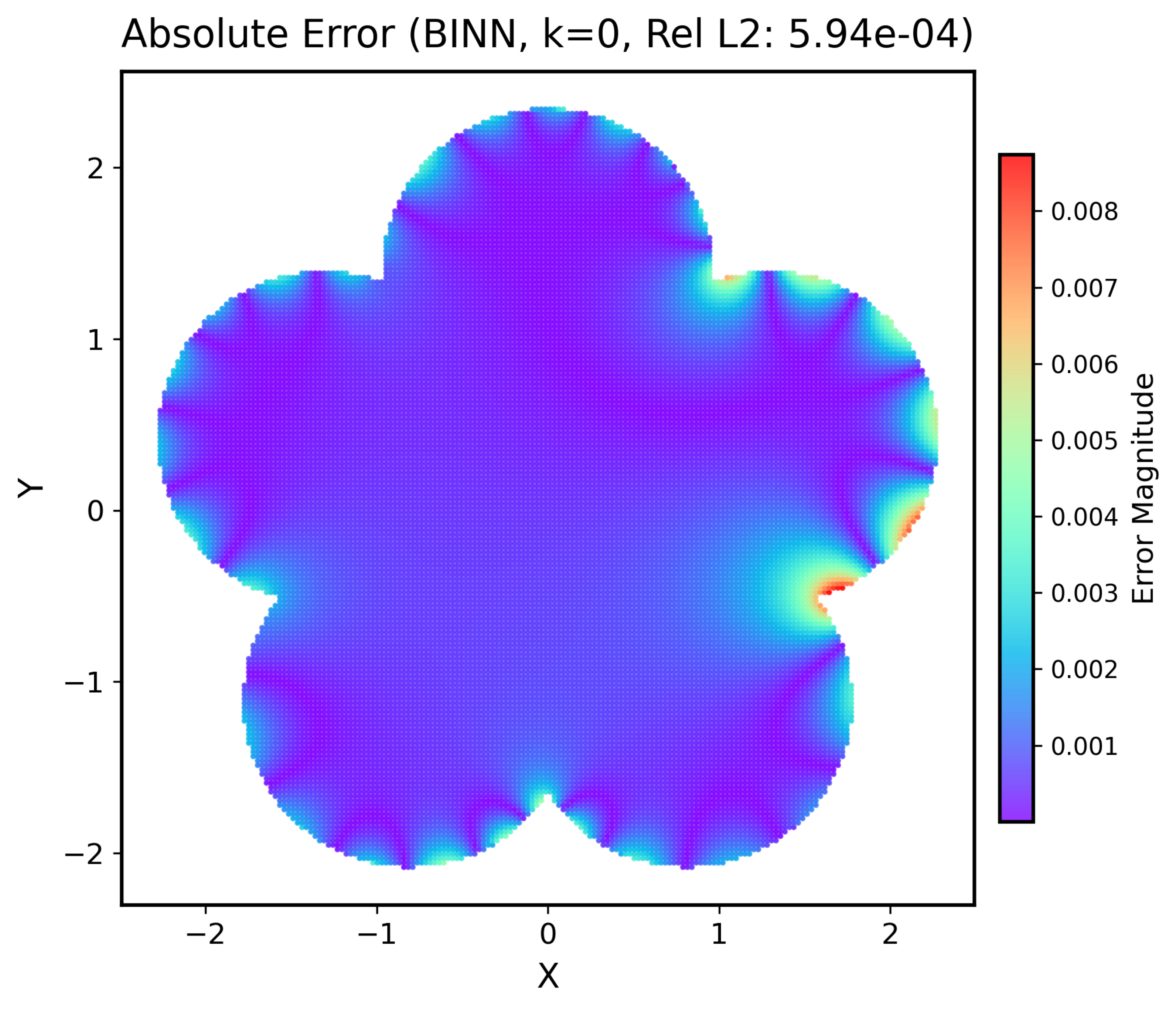}
        \caption{Error plot of the BINN method.}
        \label{fig:flower_binn_error}
    \end{subfigure}
    
    \caption{Comparison of the predicted solutions and error plots of Laplace equation \eqref{eq:laplace_eq_1} with Dirichlet boundary condition by BEM, BNM-RF, and BINN methods.}
    \label{fig:laplace_sol}
\end{figure}
\begin{table}[htbp]
  \centering
  \caption{Relative $L^2$ error of Eq.\eqref{eq:laplace_eq_1} with different methods}
  \label{tab:error_comparison}
  \begin{tabular}{lccc}
    \toprule
    & BINN & BEM & BNM-RF \\
    \midrule
     & $5.94 \times 10^{-4}$   & $1.73 \times 10^{-4}$ & $\mathbf{2.02 \times 10^{-5}}$ \\
    \bottomrule
  \end{tabular}
\end{table}

\begin{table}[htbp]
  \centering
  \caption{Relative $L^2$ error of Eq.\eqref{eq:laplace_eq_1} with different random initialization parameters}
  \label{tab:error_comparison_lap_uniform}
  \begin{tabular}{lccc}
    \toprule
   tanh & $[-1,1]$ & $[-2,2]$ & $[-4,4]$ \\
    \midrule
    & $2.02 \times 10^{-5}$ &  $1.42 \times 10^{-5}$& $4.78 \times 10^{-5}$ \\
    \midrule
   cos & $\gamma=1.0$ & $\gamma=2.0$ & $\gamma=4.0$ \\
    \midrule
    & $1.39 \times 10^{-5}$ & $1.39 \times 10^{-5}$ & $1.40 \times 10^{-5}$ \\
    \bottomrule
  \end{tabular}
\end{table}

It is observed that the BNM-RF achieves superior performance compared to BINN under the condition of fewer trainable parameters. This improvement is mainly due to the use of the least squares method in BNM-RF to efficiently determine the connection weights between the hidden layer and the output layer. In contrast, BINN relies on gradient descent-based iterative optimization, which may suffer from slow convergence, sensitivity to local minima, and long training time. As a result, BNM-RF shows better training efficiency and stability.

Table \ref{tab:error_comparison_lap_uniform} further investigates the robustness of the proposed BNM-RF with respect to the initialization of random parameters. For the $\tanh$ activation function, we test different ranges for the weight initialization, while for the cosine activation function, we vary the parameter $\gamma$. The results indicate that the performance of BNM-RF remains stable and consistently accurate  across a range of initialization settings, demonstrating its low sensitivity to these hyperparameters.

\subsection{Exterior problem for the Helmholtz equation}

As a further example, we consider the 2D exterior Helmholtz problem with Dirichlet boundary condition and Sommerfeld radiation condition:
    \begin{equation} 
        \label{eq:origin_helmholtz}
        \begin{split}
            - \Delta u(\mathbf{x}) - k^2 u(\mathbf{x}) = 0 &, \quad \mathbf{x} \in \Omega^c, \\
            u(\mathbf{x}) = \bar{u}(\mathbf{x})&, \quad \mathbf{x} \in \partial \Omega, \\
            \lim_{|\mathbf{x}| \to \infty} (\frac{\partial}{\partial r} - i k) u(\mathbf{x}) = o(|\mathbf{x}|^{-1/2})
        \end{split}
    \end{equation}
where $\bar{u}(\mathbf{x})$ is given by the analytical solution:    
\begin{equation}
\label{eq:helmholt_eq_1}
u(\mathbf{x})=H_0^{(1)}(k\sqrt{x_1^2+x_2^2})
\end{equation}
This study considers an unbounded domain with a star-shaped region as the interior boundary, denoted by $\Omega^c$. A dirichlet boundary condition is imposed on its boundary $\partial\Omega$. To systematically evaluate the performance of different numerical methods, we perform a series of experiments with varying wave numbers $k$. Three methods—BINN, BEM and BNM-RF are employed to solve the problem.

In the numerical implementation, all three methods uniformly distribute 100 collocation points on the boundary and employ the 10-point Gaussian quadrature rule to evaluate the boundary integrals. For the BINN, we employ a fully connected  neural network architecture comprising 4 hidden layers with 80 neurons each, trained for 12000 iterations. For the BNM-RF, the network contains only a single hidden layer with 60 neurons. We show the solution by different methods in Figure~\ref{fig:helmholtz_sol_dirichlet} and the relative $L^2$ error of these methods is shown in Table \ref{tab:error_comparison_helm}.

\begin{figure}[ht]
    \centering
        \begin{subfigure}[b]{0.32\textwidth}
        \centering
        \includegraphics[width=\linewidth]{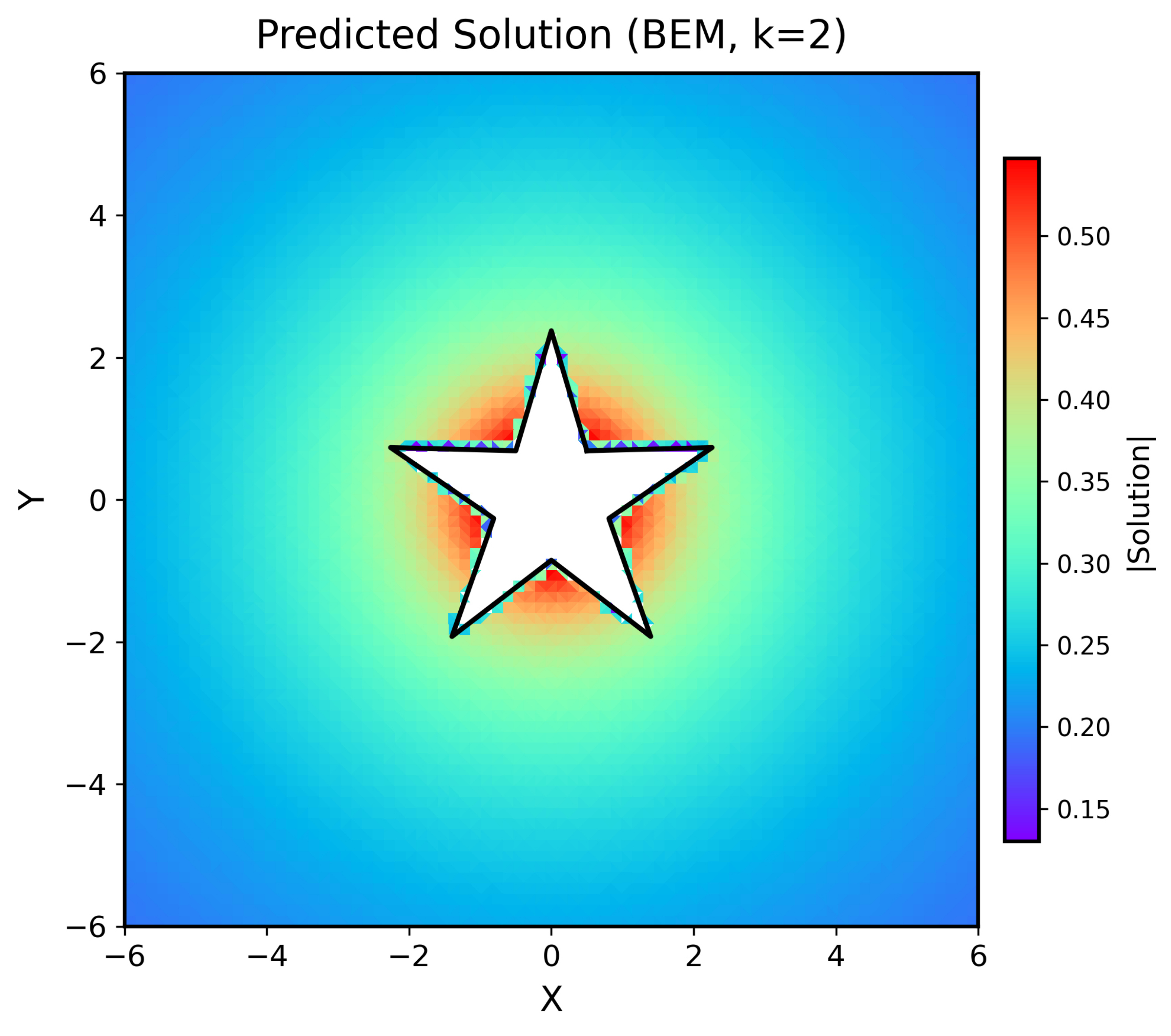}
        \caption{BEM predicted solution.}
        \label{fig:bem_pred}
    \end{subfigure}
    \hfill
    \begin{subfigure}[b]{0.32\textwidth}
        \centering
        \includegraphics[width=\linewidth]{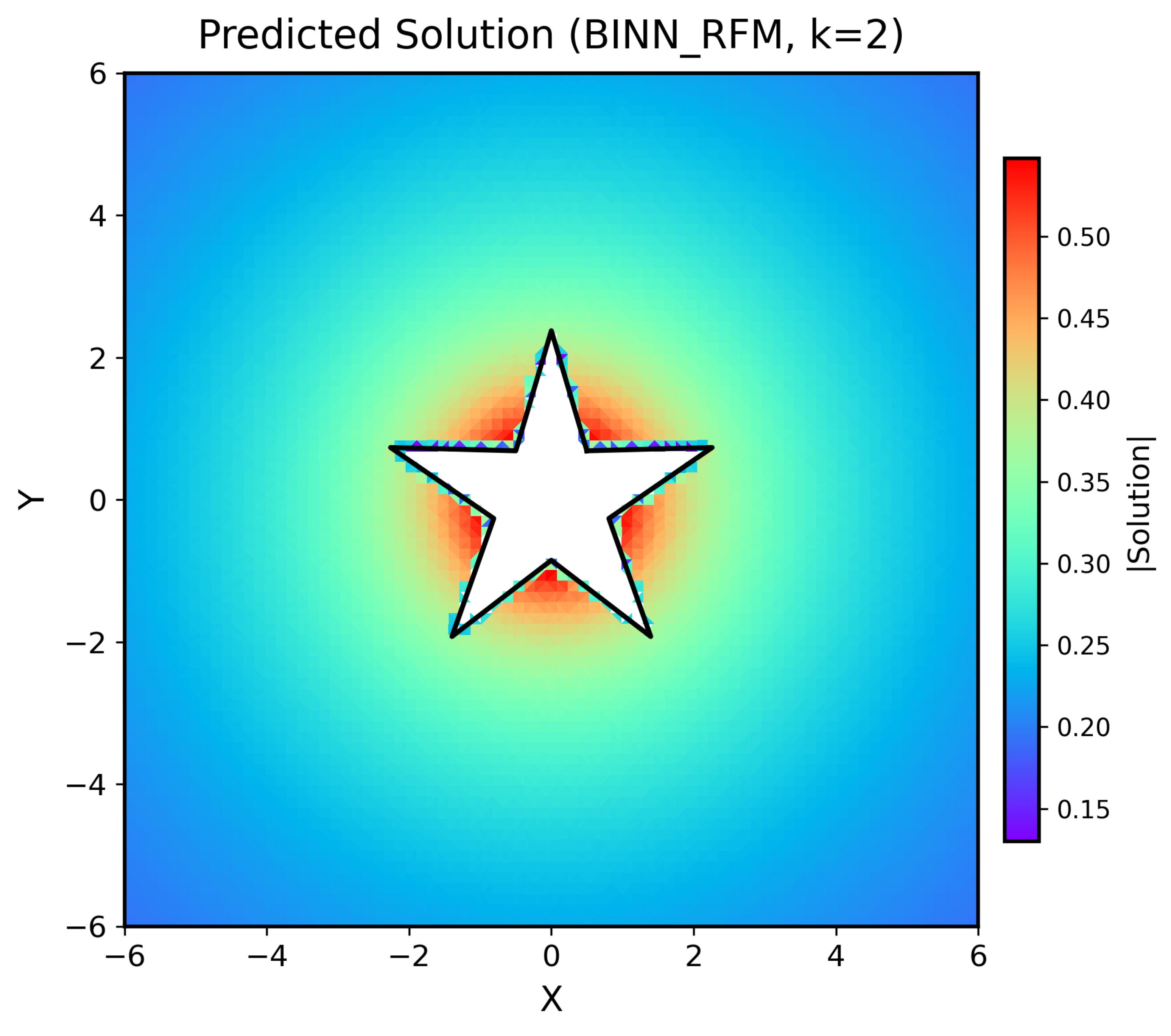}
        \caption{BNM-RF predicted solution.}
        \label{fig:bnm_pred}
    \end{subfigure}
    \hfill
    \begin{subfigure}[b]{0.32\textwidth}
        \centering
        \includegraphics[width=\linewidth]{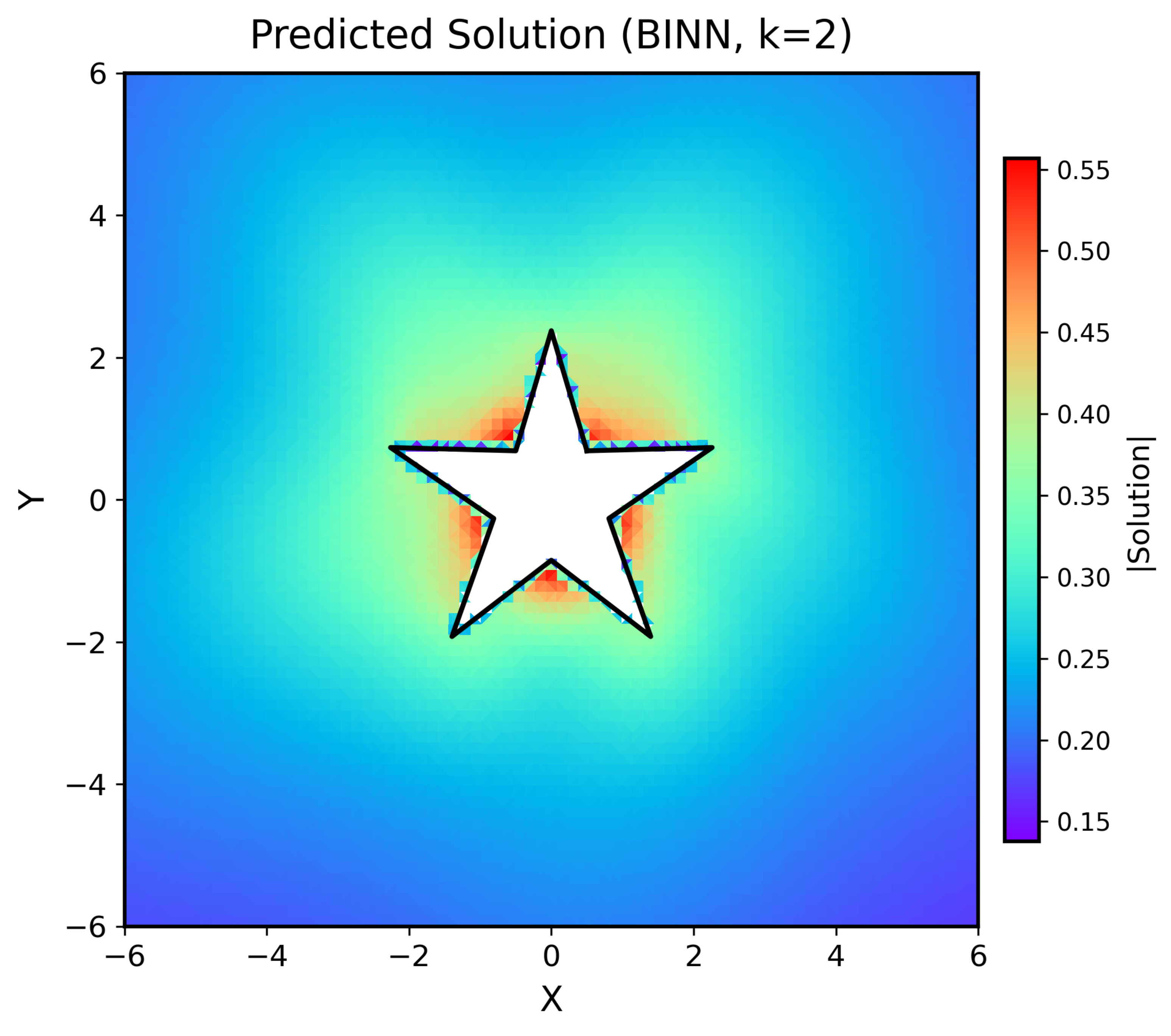}
        \caption{BINN predicted solution.}
        \label{fig:binn_pred}
    \end{subfigure}
    
    \vspace{0.5cm}
    
        \begin{subfigure}[b]{0.32\textwidth}
        \centering
        \includegraphics[width=\linewidth]{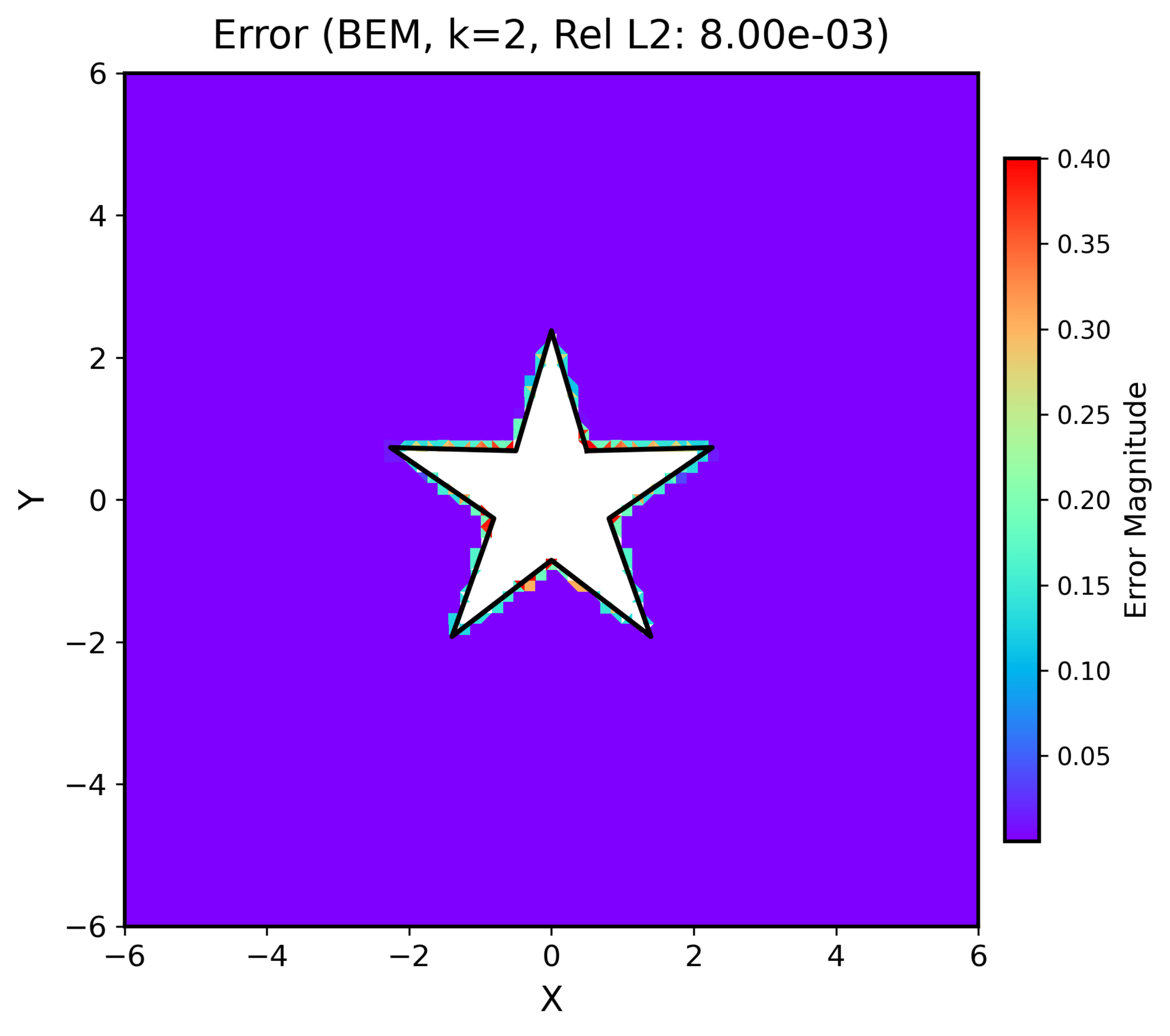}
        \caption{BEM error plot.}
        \label{fig:bem_error}
    \end{subfigure}
    \hfill
    \begin{subfigure}[b]{0.32\textwidth}
        \centering
        \includegraphics[width=\linewidth]{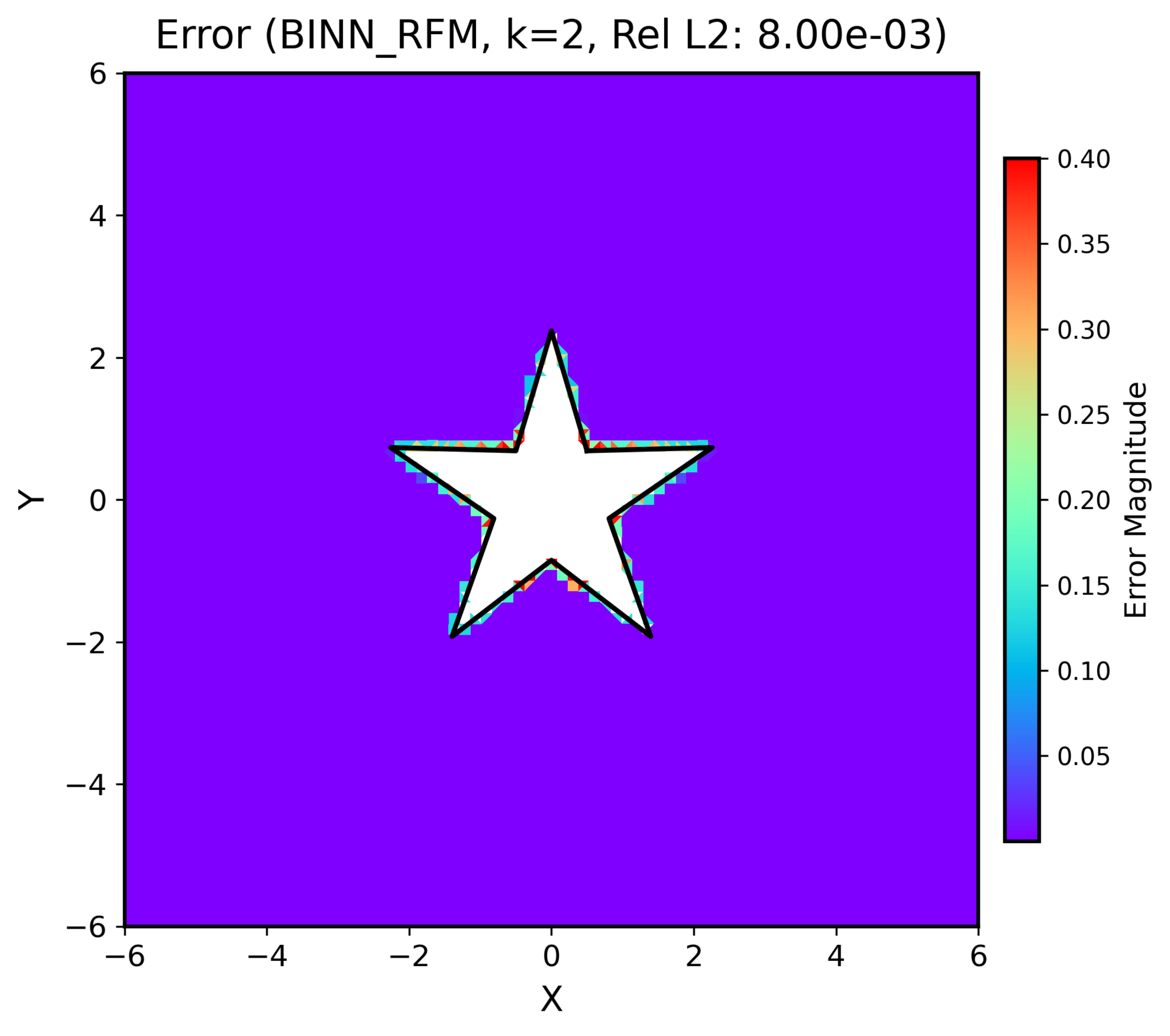}
        \caption{BNM-RF error plot.}
        \label{fig:bnm_error}
    \end{subfigure}
    \hfill
    \begin{subfigure}[b]{0.32\textwidth}
        \centering
        \includegraphics[width=\linewidth]{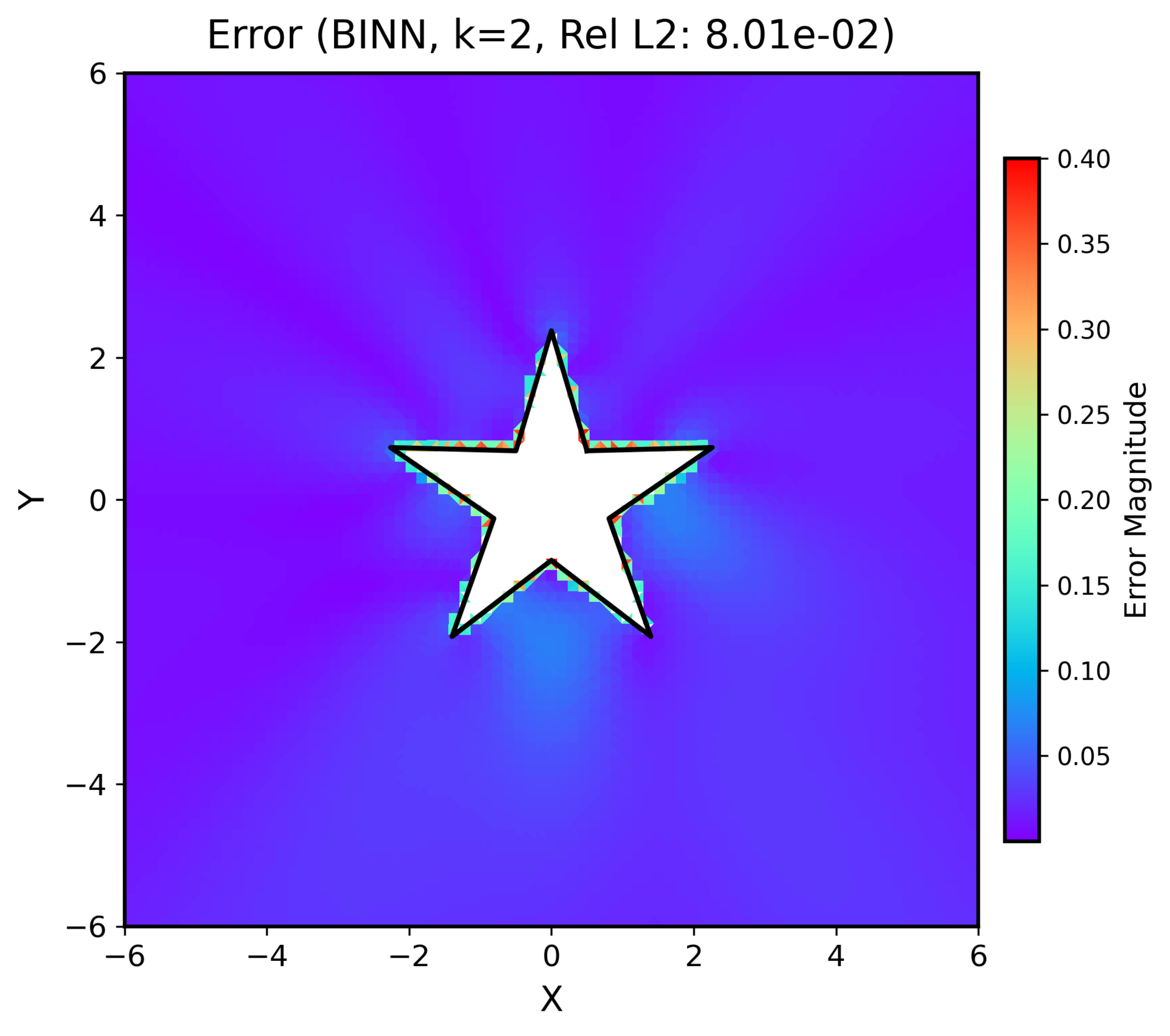}
        \caption{BINN error plot.}
        \label{fig:binn_error}
    \end{subfigure}

\caption{Numerical results for Helmholtz equation \eqref{eq:origin_helmholtz} with $k=2$ and Dirichlet boundary condition.}
\label{fig:helmholtz_sol_dirichlet}
\end{figure}

\begin{table}[htbp]
  \centering
  \caption{Relative $L^2$ error of Eq.\eqref{eq:origin_helmholtz} with different methods}
  \label{tab:error_comparison_helm}
  \begin{tabular}{lccc}
    \toprule
    & BINN & BEM & BNM-RF \\
    \midrule
    $k=2$ & $8.01\times 10^{-2}$ & $8.00\times 10^{-3}$ & $7.99\times 10^{-3}$ \\
    $k=4$ & $9.78\times 10^{-2}$ & $8.10\times 10^{-3}$ & $8.02\times 10^{-3}$ \\
    $k=6$ & $1.21\times 10^{-1}$ & $8.87\times 10^{-3}$ & $8.91\times 10^{-3}$ \\
        \bottomrule
  \end{tabular}
\end{table}

As shown in Table \ref{tab:error_comparison_helm}, the relative $L^2$ errors of BNM-RF and BEM are consistently much smaller than those of BINN for all tested wave numbers. Moreover, BNM-RF achieves accuracy comparable to that of BEM at low and moderate wave numbers. These results indicate that BNM-RF is an effective method for solving the present exterior Helmholtz problem and performs significantly better than BINN in terms of accuracy.

\subsection{Acoustic scattering problem}
\label{sec:acoustic-scattering}

We consider a classical benchmark in acoustic scattering: a plane wave incident on a sound-hard sphere in an unbounded domain. This problem is widely used to validate boundary-type numerical methods such as the boundary element method (BEM). The computational details, including the Burton--Miller formulation \cite{kirkup2019acoustics}, surface discretization, collocation strategy, and numerical quadrature, are provided in Appendix~\ref{app:operators}. Due to the relatively slow iterative training of BINN, it is not included in the present experiment.

The sphere is assumed to be sound-hard, that is,
$$
\frac{\partial \phi}{\partial \mathbf{n}}=0,
\qquad \forall \mathbf{x}\in\partial\Omega.
$$

A plane wave is incident on the sphere in the \(z\)-direction. The incident acoustic potential is given by
\begin{equation}
\phi_{\mathrm{inc}}(\mathbf{x}) = A e^{\mathrm{i}\mathbf{k}\cdot\mathbf{x}}
\end{equation}
where \(A\) is the wave amplitude and \(\mathbf{k}=k\mathbf{d}\), with \(|\mathbf{d}|=1\), is the wave vector in the direction \(\mathbf{d}\). In this experiment, \(\mathbf{d}=\{0,0,1\}\) and \(A=1\). The total acoustic potential is defined by
\begin{equation}
\phi=\phi_s+\phi_{\mathrm{inc}}
\end{equation}
where \(\phi_s\) denotes the scattered field.

For this benchmark, the analytical solution  of the scattered field can be found in \cite{1969Theoretical} and is repeated here for convenience:
\begin{equation}
\phi_s(r,\theta)=\sum_{n=1}^{\infty}
-\frac{\mathrm{i}^n(2n+1)j_n'(ka)}{h_n'(ka)}\,P_n(\cos\theta)\,h_n(kr). \tag{42}
\end{equation}
Here \(j_n\) denotes the spherical Bessel function of the first kind of order \(n\), \(h_n\) denotes the spherical Hankel function of the first kind of order \(n\), \(P_n\) denotes the Legendre polynomial of degree \(n\), and \(a\) is the radius of the sphere. The symbols \(j_n'\) and \(h_n'\) denote derivatives with respect to their arguments.

Because the scattered field is oscillatory, a sufficiently fine discretization is required to achieve accurate numerical resolution. The BNM-RF is tested with different numbers of neurons. The numerical solutions obtained by different methods are shown in Figure \ref{fig:plane-wave-impinged}, and the relative \(L^2\) errors are reported in Table \ref{tab:error_comparison_scatter}.

Figure \ref{fig:plane-wave-impinged} shows that both BEM and BNM-RF reproduce the main scattering pattern of the acoustic field for \(k=8\), with good overall agreement with the exact solution. However, BNM-RF exhibits slightly larger local deviations near the sphere. Table \ref{tab:error_comparison_scatter} shows that the relative \(L^2\) errors increase with the wavenumber for both methods. Although BEM is more accurate, the errors of BNM-RF remain comparable in magnitude. The lower accuracy of BNM-RF, as well as the lack of improvement when the number of neurons increases from 32 to 64, may be mainly attributed to the insufficient treatment of singular integrals in the present implementation. This phenomenon may also be related to the geometric approximation error introduced by the triangular mesh approximation of the sphere.

  \begin{figure}[ht]
\centering
\begin{subfigure}{0.3\textwidth}
    \centering
    \includegraphics[width=\linewidth]{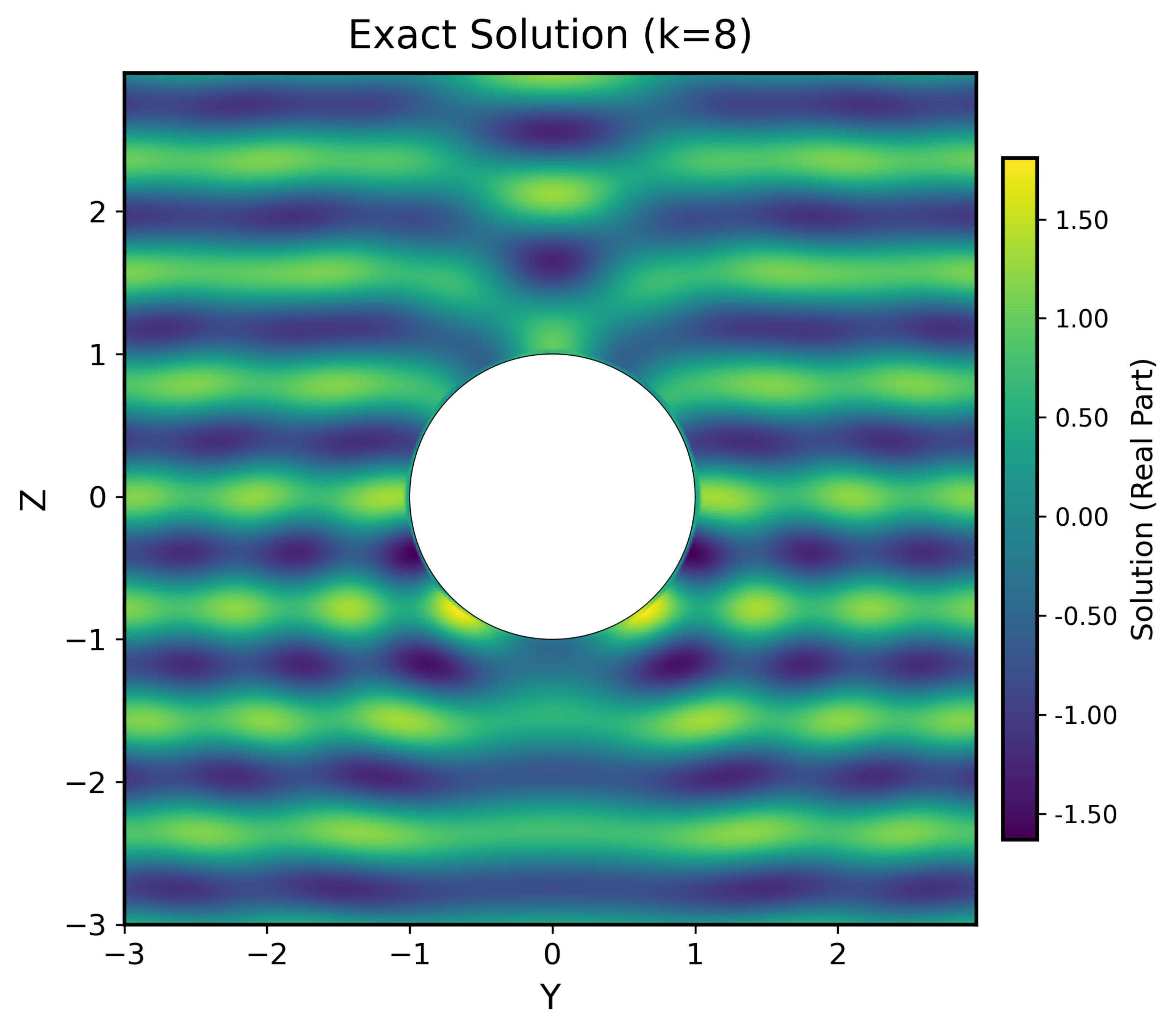}
    \caption{Exact solution on the Y-Z plane}
    \label{subfig:exact_solution}
\end{subfigure}
\hfill
\begin{subfigure}{0.3\textwidth}
    \centering
    \includegraphics[width=\linewidth]{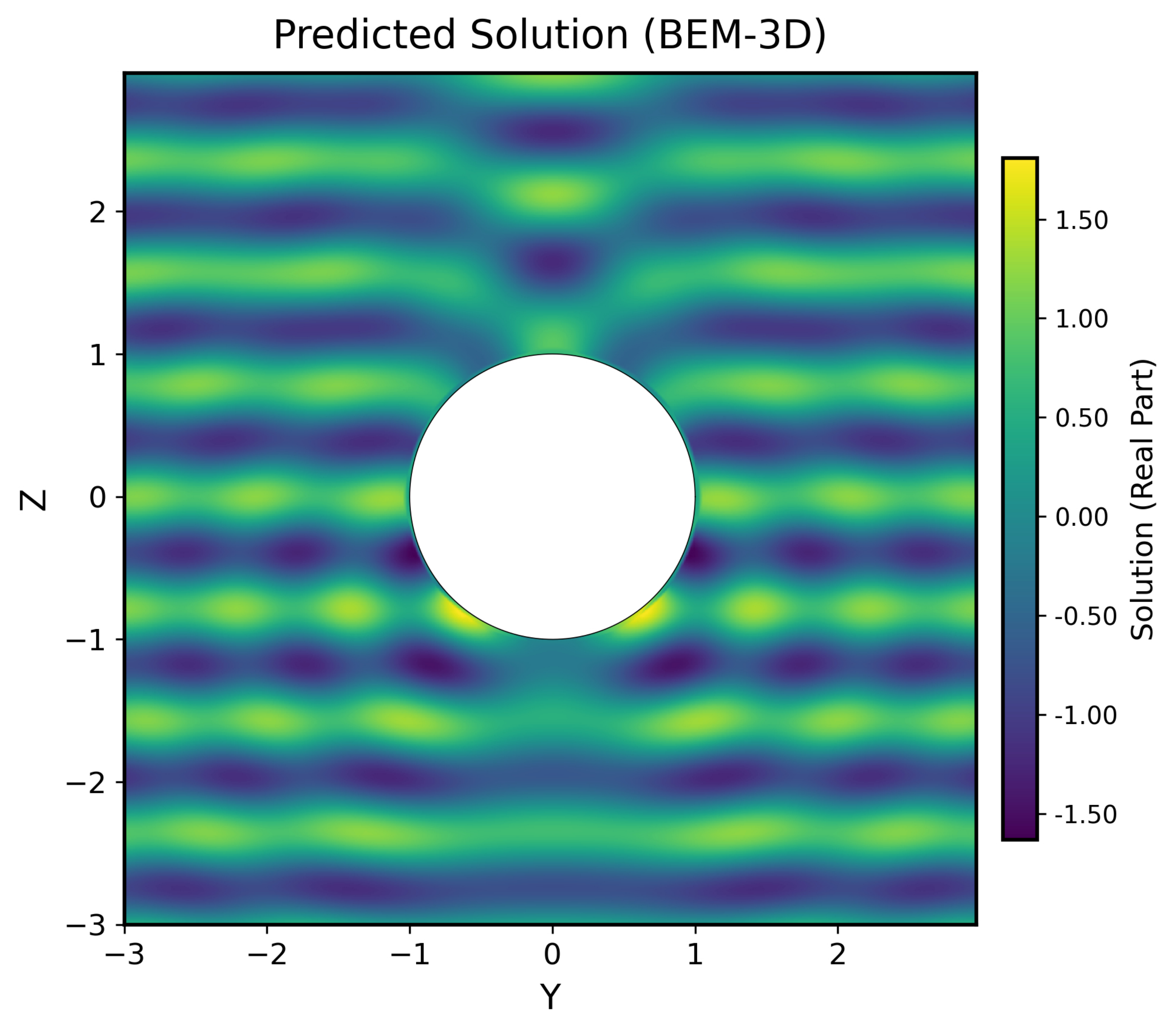}
    \caption{BEM on the Y-Z plane}
    \label{subfig:BEM_k2}
\end{subfigure}
\hfill
\begin{subfigure}{0.3\textwidth}
    \centering
    \includegraphics[width=\linewidth]{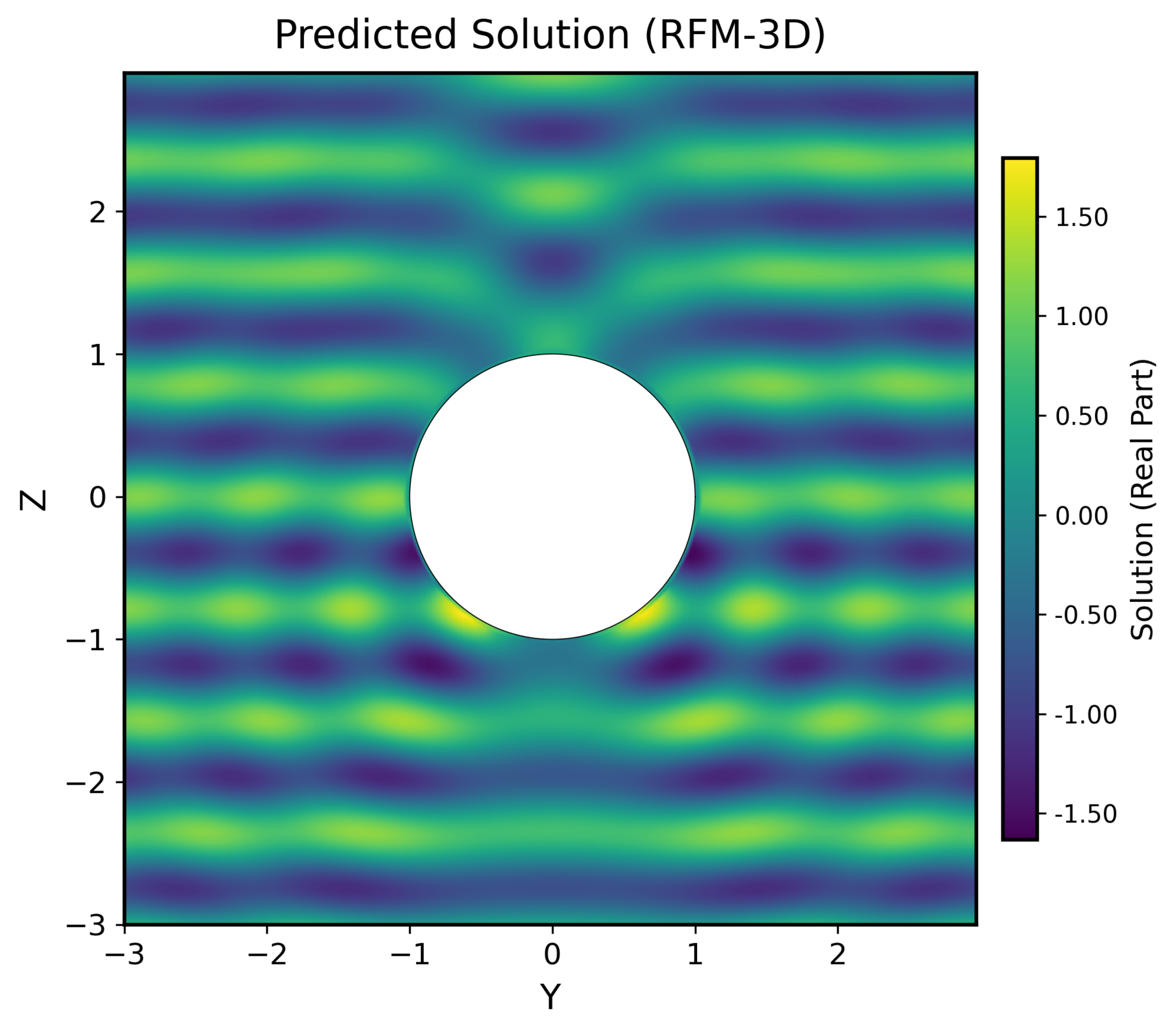}
    \caption{BNM-RF on the Y-Z plane}
    \label{subfig:BNM-RF_k2}
\end{subfigure}

\vspace{0.3cm}

\begin{subfigure}{0.3\textwidth}
    \centering
    \includegraphics[width=\linewidth]{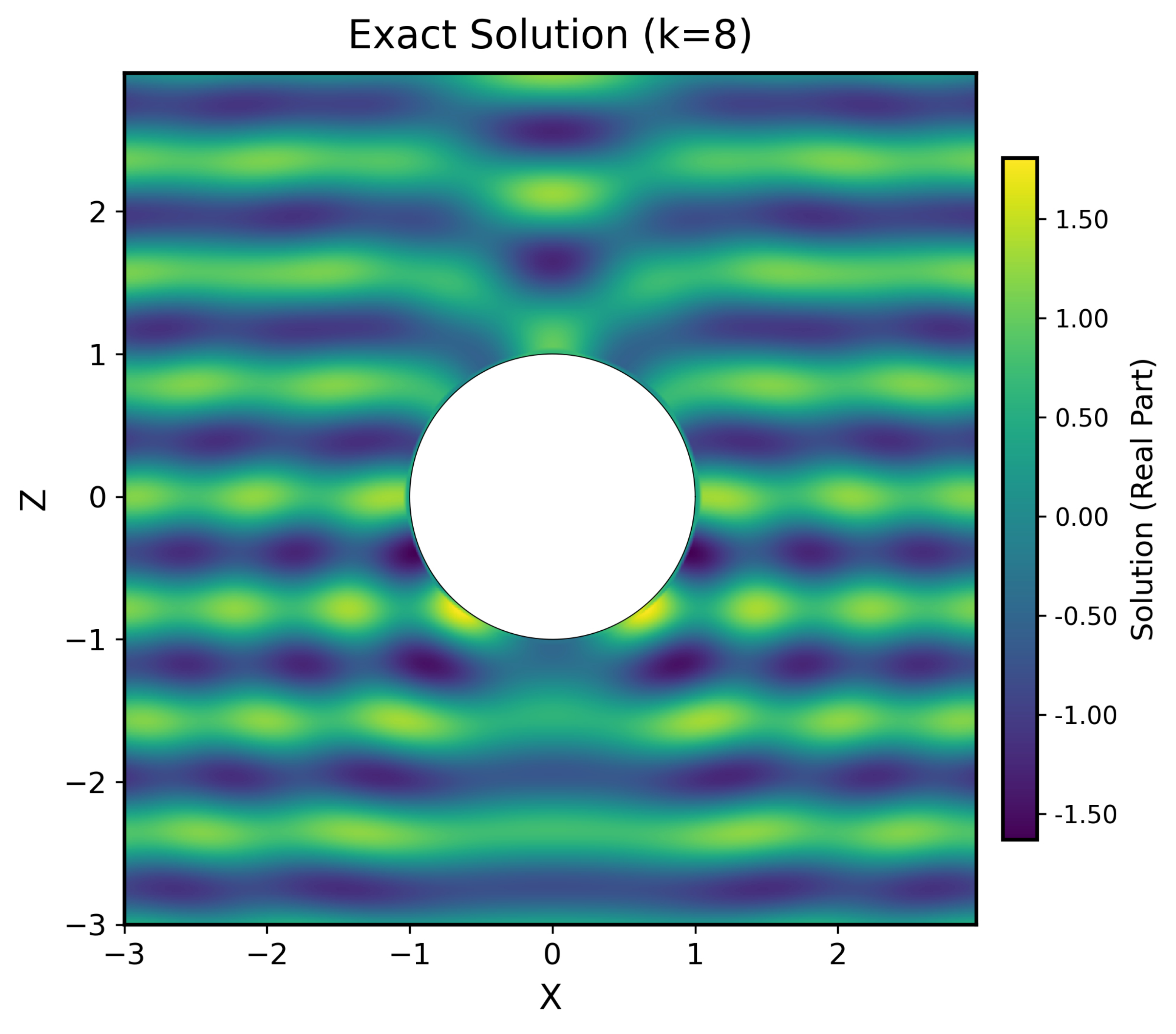}
    \caption{Exact solution on the X-Z plane}
    \label{subfig:BEM_k4}
\end{subfigure}
\hfill
\begin{subfigure}{0.3\textwidth}
    \centering
    \includegraphics[width=\linewidth]{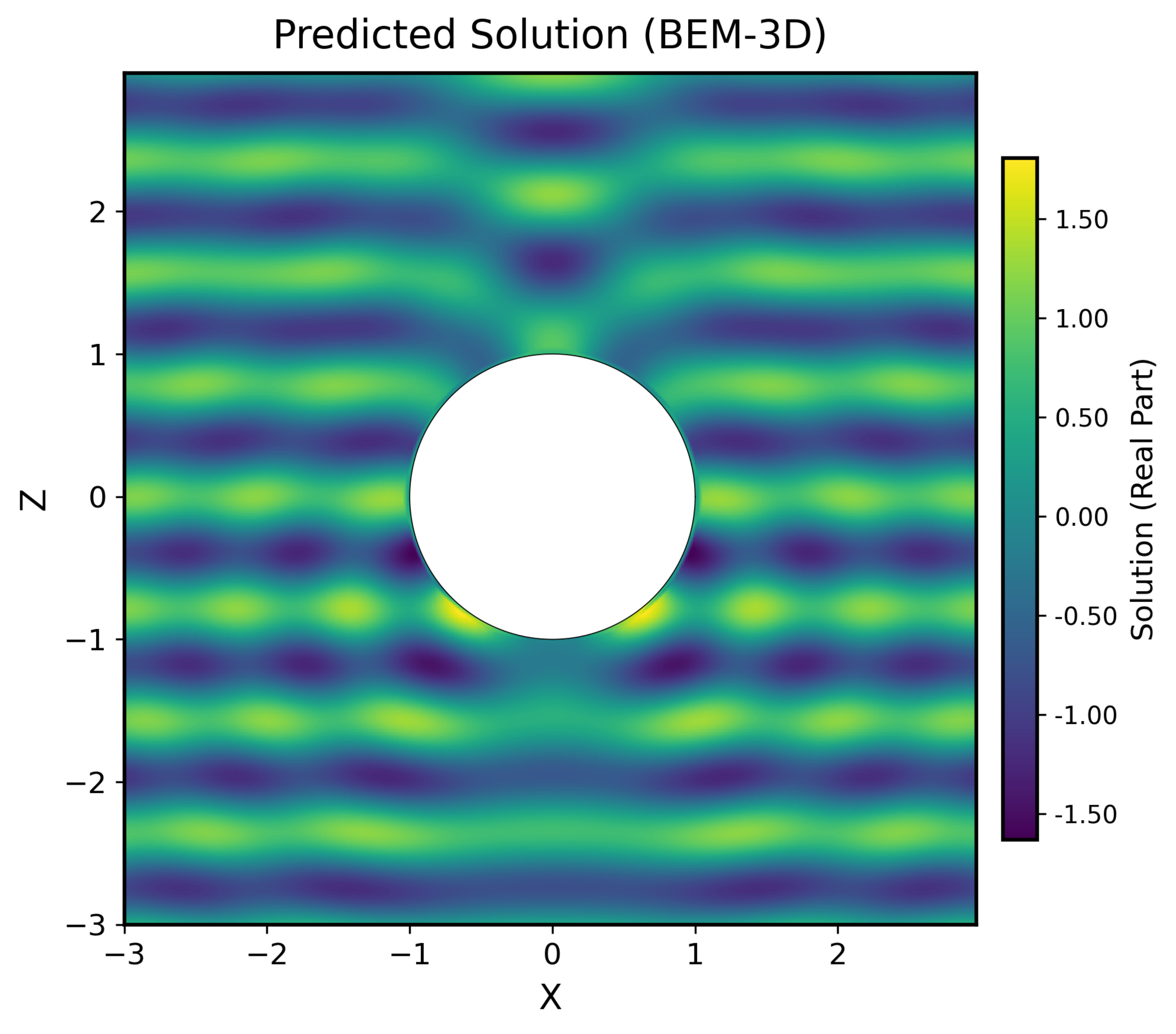}
    \caption{BEM on the X-Z plane}
    \label{subfig:BNM-RF_k4}
\end{subfigure}
\hfill
\begin{subfigure}{0.3\textwidth}
    \centering
    \includegraphics[width=\linewidth]{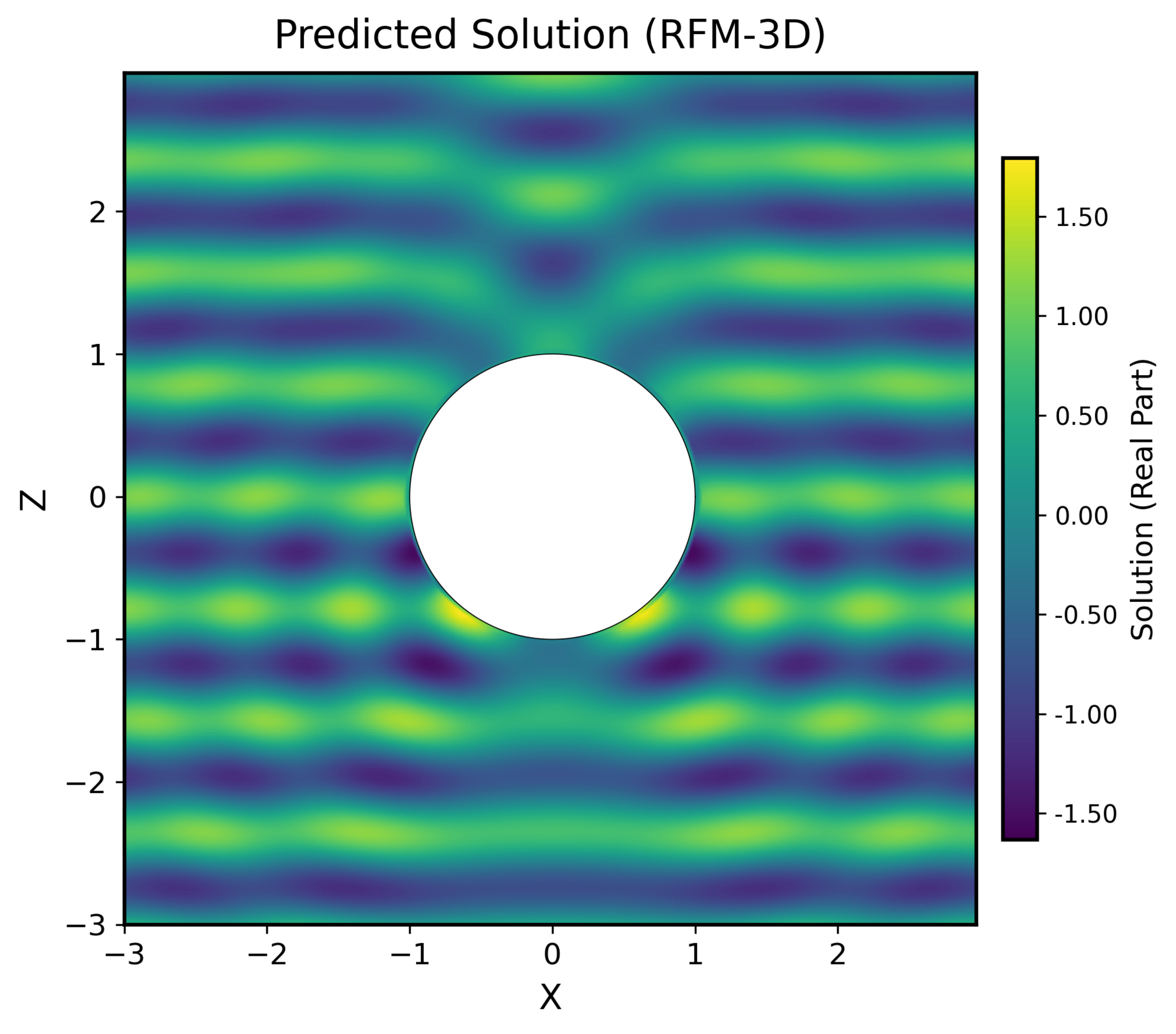}
    \caption{BNM-RF on the X-Z plane}
    \label{subfig:method3_k4}
\end{subfigure}

\vspace{0.3cm}

\begin{subfigure}{0.3\textwidth}
    \centering
    \includegraphics[width=\linewidth]{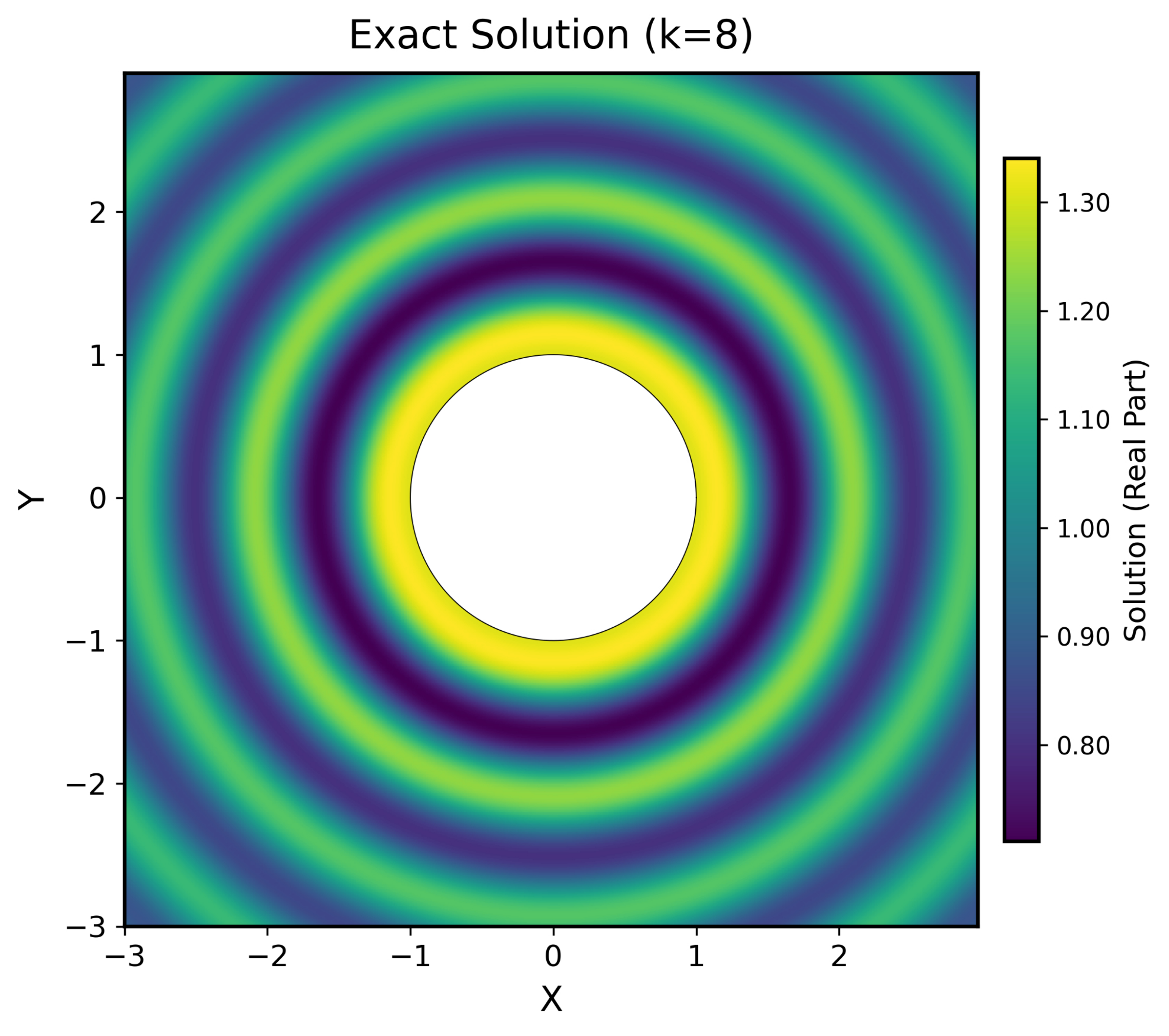}
    \caption{Exact solution on the X-Y plane}
    \label{subfig:BEM_k8}
\end{subfigure}
\hfill
\begin{subfigure}{0.3\textwidth}
    \centering
    \includegraphics[width=\linewidth]{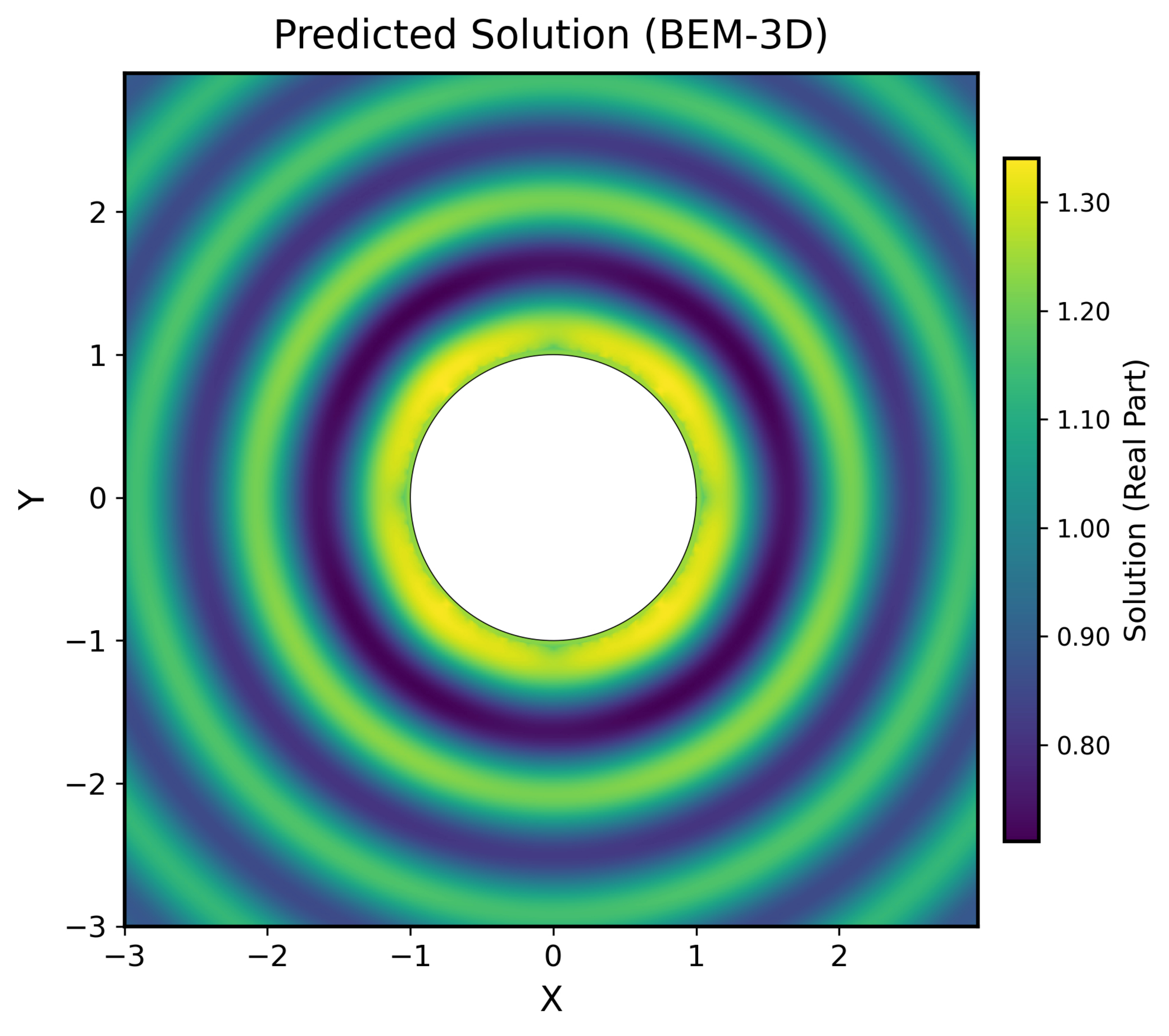}
    \caption{BEM on the X-Y plane}
    \label{subfig:BNM-RF_k8}
\end{subfigure}
\hfill
\begin{subfigure}{0.3\textwidth}
    \centering
    \includegraphics[width=\linewidth]{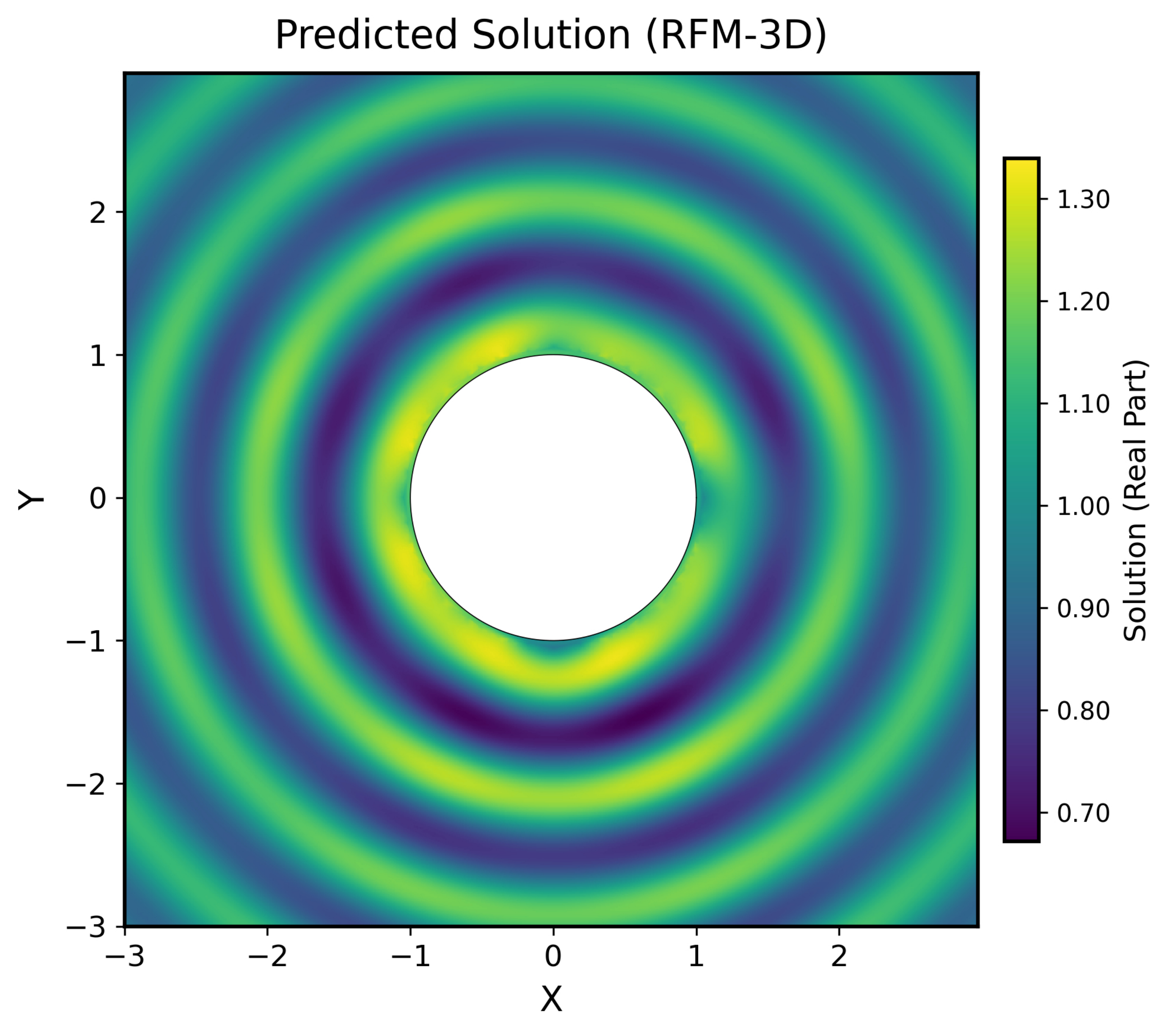}
    \caption{BNM-RF on the X-Y plane}
    \label{subfig:method3_k8}
\end{subfigure}

\caption{Three-dimensional representations of numerical solutions on cross-sectional planes (Y-Z, X-Z, and X-Y) for plane wave scattering by a rigid sphere with wavenumber $k=2$ and $M=32$ neurons, obtained by BEM and BNM-RF.}
\label{fig:plane-wave-impinged}
\end{figure}

\begin{table}[htbp]
  \centering
  \caption{Relative $L^2$ error of plane wave impinged on a sphere with BEM and BNM-RF method}
  \label{tab:error_comparison_scatter}
  \begin{tabular}{lccccc}
    \toprule
     & BEM & BNM-RF(32) & BNM-RF(64)  \\
    \midrule
    $k=4$ & $2.56\times 10^{-2}$ & $3.86\times 10^{-2}$ & $5.45\times 10^{-2}$  \\
    $k=8$ &  $3.95\times 10^{-2}$ & $6.16\times 10^{-2}$ & $7.19\times 10^{-2}$  \\
    $k=12$ & $8.27\times 10^{-2}$ & $1.16\times 10^{-1}$ & $1.40\times 10^{-1}$ \\
    \bottomrule
  \end{tabular}
\end{table}
  
\section{Conclusion and Future Work}
\label{sec:conclusion}

In this study, we proposed BNM-RF for solving boundary value problems through boundary integral equations. The proposed method provides a new way to combine the random feature method with boundary integral formulations. BNM-RF bridges the classical boundary element method and existing random-feature-based approaches for boundary integral equations. It retains the dimensional reduction of boundary integral formulations and is naturally suited to exterior problems, while offering a neural representation of the boundary unknown.

For elliptic problems, we established convergence and obtained error bounds for both the boundary function and the interior solution. Numerical experiments on Laplace and Helmholtz problems, including interior and exterior cases, show that the proposed method is effective in the tested settings. In the two-dimensional examples, it achieves better accuracy than the classical BEM and BINN. In the three-dimensional acoustic scattering task, BNM-RF achieves comparable accuracy to BEM while using only a small number of neurons. The results further suggest that refinements in the treatment of singular integrals and geometric approximation will be important for improving accuracy in more challenging settings.

Therefore, the present work should be viewed as a practical and theoretically grounded framework rather than as a replacement for mature boundary element solvers in all situations. Several directions deserve further study. The current theory is developed for elliptic problems, and broader applications will still depend on the availability of suitable boundary integral formulations and fundamental solutions. Improving the treatment of singular and nearly singular integrals, together with geometric approximation in three dimensions, is likely to be essential for further gains in accuracy. It is also important to analyze the dense matrix arising in the solution process and to combine the present framework with fast multipole methods or other accelerated boundary integral solvers. Finally, extending the framework to time-dependent, variable-coefficient, and nonlinear problems remains an important direction whenever appropriate boundary integral representations can be constructed.

\appendix
\section{Sobolev spaces and sampling inequalities on manifolds}
\label{app1}
\section*{A.1 Basic concepts and definitions}

This appendix introduces the theory of Sobolev spaces defined on smooth compact Riemannian manifolds with corners and presents a key sampling inequality. This inequality serves as the theoretical foundation for analyzing the convergence of collocation methods.

Consider a $t$-dimensional (with $t \leq d$) smooth compact Riemannian manifold with corners, denoted $\mathcal{M}$, embedded in $\mathbb{R}^{d}$. On such a manifold, we can define its intrinsic geodesic distance:
$$
\rho_{\mathcal{M}}(\mathbf{x}, \mathbf{y}):=\inf _{\ell} \int_{0}^{1}\|{\frac{d\ell(t)}{dt}}\| d t, \quad \mathbf{x}, \mathbf{y} \in \mathcal{M},
$$
where the infimum is taken over all piecewise smooth curves $\ell:[0,1]\to\mathcal{M}$ connecting $\mathbf{x}$ and $\mathbf{y}$.

To define Sobolev spaces on the manifold, we use local coordinate charts together with a partition of unity. Let \(\{(M_j,\Psi_j)\}_{j=1}^{N_1}\) be an atlas of \(\mathcal M\), and let \(\{\kappa_j\}\) be a partition of unity subordinate to this atlas. For a function \(f:\mathcal M\to\mathbb R\), define
$$
\pi_j(f)(\mathbf y):=
\begin{cases}
\kappa_j\bigl(f(\Psi_j^{-1}(\mathbf y))\bigr), & \mathbf y\in \Psi_j(M_j),\\[4pt]
0, & \text{otherwise}.
\end{cases}
$$
Here, the set \(\Xi_j\) depends on the type of coordinate chart \(\Psi_j\): it may be \(\mathbb R^t\) (for an interior chart), a half-space (for a boundary chart), or a quadrant (for a corner chart).

Then, for \(u:\mathcal M\to\mathbb R\), the Sobolev norm on $\mathcal M$ is defined by
$$
\|u\|_{H^s(\mathcal M)}
:=
\left(
\sum_{j=1}^{N_1}
\|\pi_j(u)\|_{H^s(\Xi_j)}^2
\right)^{1/2}.
$$
All functions satisfying $\|u\|_{H^{s}(\mathcal{M})}<\infty$ form the Sobolev space $H^{s}(\mathcal{M})$

\section*{A.2 Sampling inequality}

Based on the above setup, we have the following key theorem. This theorem was proven for manifolds without boundary in \cite{fuselier2012scattered}. The main idea of the proof—localizing via the atlas and applying classical sampling theorems—can be directly extended to the case of manifolds with corners.

\begin{Proposition}[Sobolev sampling inequality on manifolds \cite{fuselier2012scattered}]
\label{prop:A1}
Suppose $\mathcal{M} \subset \mathbb{R}^{d}$ is a smooth, compact, Riemannian manifold with corners, of dimension $t$ and let $s > \frac{t}{2}$ and $r \in \mathbb{N}$ satisfy $0 \leq r \leq \lceil s \rceil - 1$. Let $X \subset \mathcal{M}$ be a discrete set with mesh norm $h_{\mathcal{M}, X}$ defined as
$$
h_{\mathcal{M}, X}:=\sup _{\mathbf{x}^{\prime} \in \mathcal{M}} \min _{\mathbf{x} \in X} \rho_{\mathcal{M}}\left(\mathbf{x}^{\prime}, \mathbf{x}\right).
$$
Then there is a constant $h_{0} > 0$ depending only on $\mathcal{M}$ such that if $h_{\mathcal{M}, X} < h_{0}$ and if $u \in H^{s}(\mathcal{M})$ satisfies $\left.u\right|_{X} = 0$ then
$$
\|u\|_{H^{r}(\mathcal{M})} \leq C h_{\mathcal{M}, X}^{s-r} \|u\|_{H^{s}(\mathcal{M})}.
$$
Here $C > 0$ is a constant independent of $h_{\mathcal{M}}$ and $u$.
\end{Proposition}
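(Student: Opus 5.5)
The statement is the Sobolev sampling inequality on a compact manifold with corners, Proposition~\ref{prop:A1}. The plan is to follow \cite{fuselier2012scattered}: localize through the atlas $\{(M_j,\Psi_j)\}$ and partition of unity $\{\kappa_j\}$, so the manifold statement reduces to the classical Euclidean sampling inequality on the model sets $\Xi_j$. The only new ingredient is that these sets may be half-spaces or quadrants rather than $\mathbb{R}^t$.

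First, I would fix the atlas so that each chart $\Psi_j$ is bi-Lipschitz with uniform constants, and shrink the supports so that $\operatorname{supp}\kappa_j$ sits compactly inside $M_j$ up to the boundary faces. Then geodesic distance on $M_j$ and Euclidean distance on $\Psi_j(M_j)$ are comparable. Consequently $\Psi_j(X\cap M_j)$ has mesh norm at most $c\,h_{\mathcal{M},X}$ in a slightly enlarged bounded Lipschitz region $D_j\subset\Xi_j$, which can be taken to satisfy an interior cone condition. Corners are harmless here, since a quadrant intersected with a ball is Lipschitz with a uniform cone.

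Second, on each $D_j$ I would apply the Euclidean zeros lemma for Lipschitz (cone-condition) domains, due to Narcowich--Ward--Wendland / Arcang\'eli et al. It says that if $v\in H^s(D_j)$ with $s>t/2$ vanishes on a set of mesh norm $\tilde h<h_j$, then $\|v\|_{H^r(D_j)}\le C\tilde h^{s-r}\|v\|_{H^s(D_j)}$ for $0\le r\le\lceil s\rceil-1$. I would apply this to $v=u\circ\Psi_j^{-1}$ rather than to $\pi_j(u)$. The point is that $u$ vanishes on $X$, but multiplying by $\kappa_j$ keeps those zeros without creating new ones, so working with the unlocalized function is cleaner.

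Third, I would reassemble. Writing
\[
\|\pi_j(u)\|_{H^r(\Xi_j)}\le C\|u\circ\Psi_j^{-1}\|_{H^r(D_j)}\le C h^{s-r}\|u\circ\Psi_j^{-1}\|_{H^s(D_j)},
\]
I would sum over $j$. The last step uses the equivalence, independent of the particular atlas, between $\big(\sum_j\|u\circ\Psi_j^{-1}\|^2_{H^s(D_j)}\big)^{1/2}$ and $\|u\|_{H^s(\mathcal{M})}$. This equivalence holds because the $D_j$ can be covered by finitely many charts with overlapping smooth transition maps, and multiplication by smooth cutoffs together with smooth changes of variables are bounded on $H^s$. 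Finally I would set $h_0=\min_j h_j/c$.

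The main obstacle is the second step. One needs a Euclidean sampling inequality whose constants are uniform on domains with corners. One also needs $s$ non-integer, with the corresponding fractional norms handled, which requires an extension operator, such as Stein's for Lipschitz domains, to compare intrinsic and restricted $H^s$ norms. I would rely on Stein extension and the cone-condition version of the zeros lemma, checking that the cone parameters are uniform across the finitely many charts.
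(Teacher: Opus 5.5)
Your proposal is correct and follows the paper's route. The paper gives no detailed proof; it only remarks that the argument of \cite{fuselier2012scattered} (localize through the atlas, then apply the classical Euclidean sampling inequality chart by chart) extends to manifolds with corners, and your plan of uniformly bi-Lipschitz charts, cone-condition domains for half-space and quadrant charts, the Lipschitz-domain zeros lemma, and reassembly is a faithful expansion of that remark. As a minor simplification, you could apply the zeros lemma directly to $\pi_j(u)$, which still vanishes on $\Psi_j(X)\cap D_j$ and has its support in $D_j$; then $\|\pi_j(u)\|_{H^s(D_j)}\le\|\pi_j(u)\|_{H^s(\Xi_j)}$ sums directly to $\|u\|_{H^s(\mathcal{M})}$, and the atlas-independence of the Sobolev norm is not needed.
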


\section{Abstract framework for convergence rate analysis}
This appendix presents an existing general theoretical framework for analyzing the convergence rates of approximate solutions to nonlinear operator equations.

Consider an operator equation of the form
\begin{equation}\label{eq:operator_equation}
\mathcal{G}(v^*) = w^*,
\end{equation}
where $v^*$ and $w^*$ are elements of the appropriate Banach spaces and $\mathcal{G}$ is a nonlinear operator. In the context of partial differential equations, $\mathcal{G}$ typically represents the differential operator defining the problem, $v^*$ corresponds to the exact solution, and $w^*$ represents the given data . The objective is to approximate $v^*$ under suitable assumptions concerning its regularity and the stability properties of the operator $\mathcal{G}$.

Throughout this section, for a Banach space $V$, we denote by $B_r(V)$ the closed ball of radius $r > 0$ centered at the origin in $V$.

\begin{theorem}[Theorem 3.1,\cite{batlle2025error}]
\label{thm:abstract_banach_spaces}
Consider abstract Banach spaces $(V_i, \|\cdot\|_i)_{i=1}^4$ as well as RKHS $(\mathcal{H}, \|\cdot\|_\mathcal{H})$. Suppose the following conditions are satisfied for any choice of $r>0$ (all the appeared constants $C(r)$ are non-decreasing regarding $r$):
\begin{enumerate}
    \item[(A1)] For any pair $v, v' \in B_r(V_1)$ there exists a constant $C = C(r) > 0$ so that
    $$
    \|v - v'\|_1 \le C \|\mathcal{G}(v) - \mathcal{G}(v')\|_2. 
    $$
    \item[(A2)] For any pair $v, v' \in B_r(V_4)$ there exists a constant $C = C(r) > 0$ so that
    $$
    \|\mathcal{G}(v) - \mathcal{G}(v')\|_3 \le C \|v - v'\|_4. 
    $$
\end{enumerate}
Suppose further that the following hold:
\begin{enumerate}
    \item[(A3)] For any $v \in V_4$, there exists a constant $C > 0$ so that
    $$
    \|v\|_4 \le C \|v\|_\mathcal{H}.
    $$
    \item[(A4)] There exists a set $\widetilde{V} \subset V_2 \cap V_3$ and a constant $\varepsilon > 0$, so that for all $w, w' \in \widetilde{V}$ it holds that
    $$
    \|w - w'\|_2 \le \varepsilon \|w - w'\|_3.
    $$
\end{enumerate}
Suppose problem  is uniquely solvable with $v^* \in \mathcal{H}$ and let $v^\dagger \in \mathcal{H}$ be any other function such that:
\begin{enumerate}
    \item[(A5)] $\mathcal{G}(v^*), \mathcal{G}(v^\dagger) \in \widetilde{V}$.
    \item[(A6)] There exists a constant $C > 0$, independent of $v^*$ and $v^\dagger$, so that
    $$\|v^\dagger\|_\mathcal{H} \le C \|v^*\|_\mathcal{H}.
    $$
\end{enumerate}
Then there exists a constant $C > 0$, depending only on $\|v^*\|_\mathcal{H}$, such that
$$
\|v^\dagger - v^*\|_1 \le C \varepsilon \|v^*\|_\mathcal{H}.
$$
\end{theorem}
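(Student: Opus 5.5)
The plan is to chain the four stability and sampling hypotheses along the path $V_1 \to V_2 \to V_3 \to V_4 \to \mathcal{H}$. Each hypothesis lets us pass from one norm to the next, and (A4) supplies the small factor $\varepsilon$.

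\textbf{Step 1: a common radius.} By (A6) we have $\|v^\dagger\|_{\mathcal{H}} \le C\|v^*\|_{\mathcal{H}}$. Together with (A3), this gives
$$\|v^*\|_4 \le C\|v^*\|_{\mathcal{H}} \quad\text{and}\quad \|v^\dagger\|_4 \le C\|v^*\|_{\mathcal{H}}.$$
So both functions lie in $B_r(V_4)$ with $r := C\|v^*\|_{\mathcal{H}}$. I also need them to lie in some ball $B_{r'}(V_1)$ with $r'$ controlled by $\|v^*\|_{\mathcal{H}}$, so that (A1) can be invoked. For this I would use that $\mathcal{H}$, or $V_4$, embeds continuously into $V_1$. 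In the application $V_1 = H^{1/2}(\partial\Omega) \supset H^{\gamma+1/2}(\partial\Omega) = V_4$, so this holds. Because every $C(r)$ is non-decreasing in $r$, all constants below depend only on $\|v^*\|_{\mathcal{H}}$.

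\textbf{Step 2: the chain of estimates.} Apply (A1) on the ball from Step 1, then (A4), which is legitimate because (A5) places $\mathcal{G}(v^*)$ and $\mathcal{G}(v^\dagger)$ in $\widetilde V$. Next apply (A2) on $B_r(V_4)$, and finally the triangle inequality together with (A3) and (A6):
$$
\|v^\dagger - v^*\|_1 \le C\|\mathcal{G}(v^\dagger)-\mathcal{G}(v^*)\|_2 \le C\varepsilon\|\mathcal{G}(v^\dagger)-\mathcal{G}(v^*)\|_3 \le C\varepsilon\|v^\dagger-v^*\|_4 \le C\varepsilon\bigl(\|v^\dagger\|_{\mathcal{H}}+\|v^*\|_{\mathcal{H}}\bigr) \le C\varepsilon\|v^*\|_{\mathcal{H}}.
$$
This is the claimed bound.

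\textbf{Main obstacle.} The delicate point is Step 1: the hypotheses (A1) and (A2) are only local, so they hold on balls, and I must check that both $v^*$ and $v^\dagger$ actually lie in the required balls in both $V_1$ and $V_4$. This is where (A6) is essential. I would handle it by fixing the radius from the a priori $\mathcal{H}$-bound and then using the monotonicity of $C(r)$. The compatibility between $V_1$ and $V_4$ needed here I would state explicitly as a mild implicit assumption, since it is satisfied in the setting where the paper applies the theorem.
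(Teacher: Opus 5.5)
Your proposal is correct and takes the same route as the original argument. The paper gives no proof of its own and imports the result from Batlle et al., where the proof is exactly your chain (A1) $\to$ (A4)/(A5) $\to$ (A2) $\to$ (A3)/(A6), applied on balls whose radius is fixed by $\|v^*\|_{\mathcal H}$. The extra embedding you flag is needed to place $v^*, v^\dagger$ in a $V_1$-ball controlled by $\|v^*\|_{\mathcal H}$, so that (A1) holds with a constant depending only on $\|v^*\|_{\mathcal H}$; it is genuinely missing from the hypotheses as transcribed, but harmless in the paper's application since $V_4 = H^{\gamma+1/2}(\partial\Omega) \hookrightarrow H^{1/2}(\partial\Omega) = V_1$.
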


\section{Burton-Miller formulation and implementation details}
\label{app:operators}
To solve the exterior acoustic scattering problem, we employ the Burton--Miller formulation to avoid the non-uniqueness issue of the conventional boundary integral equation at characteristic wavenumbers. 

For the numerical implementation in Section~\ref{sec:acoustic-scattering}, the spherical boundary is discretized using a triangular surface mesh generated by Gmsh, resulting in 648 triangular elements. In BNM-RF, the coordinates of the mesh nodes are used as inputs to the neural network. The centroid of each triangular element is chosen as the collocation point. Surface integrals over triangular elements are evaluated using Gaussian quadrature rules \cite{kirkup2019acoustics}, with 7 Gaussian points and the corresponding weights on each element.

The four Helmholtz boundary integral operators are defined as:
\begin{align}
\{L_k \zeta\}_{\Gamma}(\mathbf{p}) &= \int_{\Gamma} G_k(\mathbf{p},\mathbf{q}) \zeta(\mathbf{q}) \, dS_q, \label{eq:C1} \\
\{M_k \zeta\}_{\Gamma}(\mathbf{p}) &= \int_{\Gamma} \frac{\partial G_k}{\partial n_q}(\mathbf{p},\mathbf{q}) \zeta(\mathbf{q}) \, dS_q, \label{eq:C2} \\
\{M_k^t \zeta\}_{\Gamma}(\mathbf{p}; \mathbf{u}_p) &= \frac{\partial}{\partial \mathbf{u}_p} \int_{\Gamma} G_k(\mathbf{p},\mathbf{q}) \zeta(\mathbf{q}) \, dS_q, \label{eq:C3} \\
\{N_k \zeta\}_{\Gamma}(\mathbf{p}; \mathbf{u}_p) &= \frac{\partial}{\partial \mathbf{u}_p} \int_{\Gamma} \frac{\partial G_k}{\partial n_q}(\mathbf{p},\mathbf{q}) \zeta(\mathbf{q}) \, dS_q, \label{eq:C4}
\end{align}
where $\Gamma$ is a boundary or partial boundary, $\mathbf{n}_q$ and $\mathbf{u}_p$ are unit vectors with $\mathbf{n}_q$ the unique normal to $\Gamma$ at $\mathbf{q}$, and $\zeta(\mathbf{q})$ is a function defined for $\mathbf{q} \in \Gamma$. $G_k(\mathbf{p},\mathbf{q})$ is the free-space Green's function for the Helmholtz equation.

Let $\varphi$ denote the potential and $v = \frac{\partial \varphi}{\partial \mathbf{n}}$ its normal derivative on the boundary. The Burton–Miller formulation leads to the following discretized system for collocation points:
\begin{equation}
\bigl[ M_k - \tfrac{1}{2}I + \mu N_k \bigr] \varphi 
= -\varphi^{\mathrm{inc}} - \mu v^{\mathrm{inc}} 
+ \bigl[ L_k + \mu (M_k^t + \tfrac{1}{2}I) \bigr] v, \label{eq:C5}
\end{equation}
where $\mu \in \mathbb{C} \setminus \{0\}$ is a coupling parameter (typically $\mu = \frac{\mathrm{i}}{k+1}$), and $\varphi^{\mathrm{inc}}, v^{\mathrm{inc}}$ are incident fields.

\bibliographystyle{elsarticle-num}
\bibliography{rfm-bie}

\end{document}